\documentclass[11pt,reqno]{amsart}

\usepackage{mathrsfs}
\usepackage{amsfonts}
\usepackage{amssymb}
\usepackage{amsthm}
\usepackage{amsmath}
\usepackage{latexsym}
\usepackage{tikz}
\usepackage{geometry}

\newtheorem{theorem}{Theorem}[section]
\newtheorem{proposition}[theorem]{Proposition}
\newtheorem{lemma}[theorem]{Lemma}

\newtheorem{remark}[theorem]{Remark}

\makeatletter \@addtoreset{equation}{section} \makeatother


\def\tilde{\widetilde}

\newcommand{\beq}{\begin{equation}}
\newcommand{\eeq}{\end{equation}}

\newcommand{\Rmnum}[1]{\expandafter\@slowromancap\romannumeral #1@}

\geometry{left=3.3cm,right=3.3cm}

\begin{document}

\title[Global classical solutions of 3D compressible viscoelastic system]
{Global classical solutions of 3D compressible viscoelastic system near equilibrium}

\author[Y. Zhu]{Yi Zhu}\address{Department of Mathematics, East China University of Science and Technology, Shanghai 200237, P.R.China}
\email{\tt zhuyim@ecust.edu.cn}

\date{}
\subjclass[2010]{76A10, 76N10, 76A05}
\keywords{ Compressible Viscoelastic System, Global  Solutions, Non-Newtonian Flow.}

\begin{abstract}
In this paper, we prove the global existence of general small solutions to compressible viscoelastic system.
We remove the ``initial state'' assumption ($\tilde \rho_0 \det F_0 =1$) and the ``div-curl'' structure assumption compared with previous works.
It then broadens the class of solutions to a great extent.
More precisely, the initial density state would not be constant necessarily, and no more structure is needed for global well-posedness theory. It's different from the elasticity system in which structure plays an important role.
Since we can not obtain dissipation information for  density  and deformation tensor directly, we introduce a new effective flux.
The core thought is regarding the wildest ``nonlinear term'' as ``linear term''.
Although the Sobolev norms of solution may increase now, we can still obtain the global existence for it.
\end{abstract}

\maketitle

\section{introduction}

\subsection{ Compressible viscoelastic system}

The three dimensional compressible viscoelastic system can be written as follows,
\begin{equation}\label{cvs}
\left\{
  \begin{array}{ll}
    \tilde{\rho}_t + \nabla\cdot ( \tilde{\rho} u)=0 , \\
    (\tilde{\rho} u)_t+\nabla\cdot (\tilde{\rho} u \otimes u)+\nabla P(\tilde{\rho}) =  \nabla \cdot(2\mu D(u))+\nabla(\lambda \nabla\cdot u)+ \nabla\cdot \Big( \frac{W_F(F)F^T}{\det F} \Big),\\
    F_t+u\cdot F=\nabla u F.
  \end{array}
\right.
\end{equation}
Here $\tilde{\rho}$ means the density and $u$ is the velocity field. $P(\tilde{\rho})$ stands for the scalar pressure which satisfies $P'(\tilde{\rho})>0$ for any $\tilde \rho$. $D(u)$ is the symmetric part of gradient  $\nabla u$, i.e., $\frac{1}{2}\big( \nabla u + (\nabla u)^T \big)$. $F$ means the deformation tensor and $W(F)$ is the elastic energy function  here. The coefficients $\mu$ and $\lambda$ are assumed to satisfy $ \mu > 0,  \frac{2}{3} \mu +\lambda > 0$.

In a special case of the Hookean linear elasticity, $W(F)=|F|^2$, the incompressible viscoelastic system takes the form \cite{chemin, hu2, linliu},

\begin{equation}\label{icvs}
\left\{
  \begin{array}{ll}
    F_t +u\cdot\nabla F =\nabla u F, \\
    u_t +u\cdot\nabla u +\nabla P = \mu \Delta u +\nabla\cdot (FF^T), \\
    \nabla \cdot u=0.
  \end{array}
\right.
\end{equation}

Recently,  both the compressible system \eqref{cvs} and the incompressible system \eqref{icvs} have been studied extensively.
To put our results in context, we shall first highlight some recent progress for these two systems.

For the incompressible system, we always consider the case deformation tensor is near a  nontrivial  equilibrium state and denote $U=F-I$ as the perturbation.
Here $I$ is the identity matrix.
Under some  ``div'' structure of initial data, i.e., $ \nabla\cdot U_0^T =0$, through  basic energy analysis, one can find the divergence part  $\nabla\cdot U$ is good  while the curl part $\nabla\times U$ behaves roughly which brings the main difficulty.
In 2D case, Lin, Liu and Zhang {\cite{linliu}} studied an auxiliary vector field and proved the global existence of small solutions with the ``div'' structure (we refer to \cite{llzhou2} for different approach).
While in 3D case, other more structure is needed to control the wildest part $\nabla\times U$.
In \cite{llzhou}, Lei, Liu and Zhou found a so-called ``curl'' structure
$$\nabla_k U_0^{ij}-\nabla_jU_0^{ik}=U_0^{lj}\nabla_lU_0^{ik}-U_0^{lk}\nabla_lU_0^{ij}, $$
which is  compatible with system \eqref{icvs}. Under both ``div'' structure and ``curl'' structure, they regard $\nabla\times U$ as higher order terms, and then prove  the results for  global small solutions in 2D and 3D.
Chen and Zhang \cite{cz} picked another ``curl-free'' structure  $\nabla \times (F_0^{-1}-I)=0 $ to solve the Cauchy problem.
Under the same structure assumption,  the initial-boundary value results were done by Lin and Zhang \cite{initial}.
Hu and Lin \cite{hulin} give the global weak solution with discontinuous initial data in 2D.
The result of critical $L^p$ framework was given by Zhang and Fang \cite{lp}. Feng, Zhu and Zi \cite{fzz} studied the blow up criterion. We also refer to \cite{fz, lzhou} for works about incompressible limit theory.
Recently, the author \cite{jfa} proved the global existence of small solutions to \eqref{icvs} without any physical structure in 3D.
We refer to \cite{old5, old6, old7, old1, old2, old3, old4} for results considering the related models.

Now, we turn to  the compressible case which is considered in this paper.
Using the identity of variation, indeed,
$$ \delta \det F=\det F \; \text{tr} (F^{-1}\delta F),$$
one get from the third equation of system \eqref{cvs},
$$ (\det F)_t + u\cdot\nabla (\det F) =\det F \nabla\cdot u .$$
Together with the first equality of system \eqref{cvs}, we can derive the  conservation type law,
$$  \big( \tilde{\rho} \det F \big)_t + u\cdot\nabla (\tilde{\rho}\det F)=0. $$
It then implies that,
\begin{equation}\nonumber
\tilde{\rho} \det F = \tilde{\rho}_0\det F_0,    \qquad (\tilde{\rho}, F)|_{t=0}=(\tilde{\rho}_0, F_0).
\end{equation}
Thus, under the following ``initial state'' assumption:
\begin{equation}\label{a2}
\tilde{\rho}_0 \det F_0=1,
\end{equation}
(notice here under Lagrangian coordinate, the ``initial state'' assumption is $\tilde{\rho}_0 =1$) the compressible viscoelastic system \eqref{cvs} with Hookean elastic material becomes the following new one,

\begin{equation}\label{cvs2}
\left\{
  \begin{array}{ll}
    \tilde{\rho}_t + \nabla\cdot ( \tilde{\rho} u)=0 , \\
    (\tilde{\rho} u)_t+\nabla\cdot (\tilde{\rho} u \otimes u)+\nabla P(\tilde{\rho}) = \nabla \cdot(2\mu D(u))+\nabla(\lambda \nabla\cdot u)+ \nabla\cdot ( \tilde{\rho} F F^T ),\\
    F_t+u\cdot F=\nabla u F.
  \end{array}
\right.
\end{equation}

 Similar to the incompressible case, if we assume that the deformation tensor is around an equilibrium state and denote $U=F-I$ as the perturbation, we can then analyze the linearized system of $(u, U)$. Through the basic energy method, one can find $\nabla \cdot U$ behaves well under the following compressible ``div'' structure,
\begin{equation}\label{div}
\nabla\cdot(\tilde{\rho}_0 F_0^T)=0.
\end{equation}
However $\nabla \times U$ is still out of control which brings the main difficulty. The following ``curl'' structure is then needed.
\begin{equation}\label{curl}
 F_0^{lk} \nabla_l F_0^{ij}-F_0^{lj}\nabla_l F_0^{ik}=0 .
\end{equation}
In \cite{zhang}, Qian and Zhang proved the global existence of small solutions to system \eqref{cvs} under the ``initial state'' assumption \eqref{a2} and the ``div-curl'' structure \eqref{div}, \eqref{curl}. They see the wildest part as higher order terms and then capture the weak dissipation. He and Xu obtained some result about bounded domain in \cite{xuli}. Hu and Wang \cite{hu} considered the multi-dimensional case.  We refer to
   \cite{com1, com2, hu3, yong, Gen, jjw, PX, Tan}  for more related  researches. By the line, all the theorems about the compressible viscoelastic system make full use of some structure assumptions.

The main goal of this paper is to establish the global existence of general small solutions to system \eqref{cvs} which mainly lies in the following two aspects.
\begin{itemize}
  \item Show that the initial density  state ($\tilde \rho_0 \det F_0$ or $\tilde \rho_0$ under Lagrangian coordinate) would not be constant necessarily. Indeed, we shall get rid of the ``initial state'' assumption \eqref{a2}. Moreover, we can deal with system \eqref{cvs} directly now.
  \item Show that the structure assumption would not be necessary in the global well-posedness theory of compressible viscoelastic system (it's different from the elasticity system in which structure plays an important role). Concretely, we will get rid of the ``div'' structure \eqref{div} which shall bring difficulty in deriving the dissipation of $\nabla\cdot U$ and get rid of the ``curl'' structure \eqref{curl} which shall bring difficulty in deriving the dissipation of $\nabla\times U$.
\end{itemize}

In a word, we'd like to broaden the class of solutions to compressible viscoelastic system \eqref{cvs} to a great extent.
 To achieve  this  goal, we want to show that under some proper transformation, the dissipation mechanism of  system \eqref{cvs} will be more clear even without any compatible condition.

Our main theorem can be stated as follows.

\begin{theorem}\label{thm1}
Consider the Cauchy problem of 3D compressible viscoelastic system \eqref{cvs} with Hookean linear elasticity, $W(F)={\frac{1}{2}}|F|^2$. And  for some suitable small constant $\epsilon$, the initial data $ (\tilde{\rho}, u, F)|_{t=0}=(\tilde{\rho}_0, u_0, F_0)$  satisfies,
\begin{equation}\label{initial}
\big\| |\nabla|^{-1} (\tilde{\rho}_0-1) \big\|_{H^3} + \big\| |\nabla|^{-1} u_0 \big\|_{H^3}+ \big\| |\nabla|^{-1} (F_0-I)\big\|_{H^3} \leq \epsilon,
\end{equation}
here $|\nabla |=(-\Delta)^{\frac{1}{2}}$. Then system \eqref{cvs} admits a unique global classical solution $(\tilde{\rho}, u, F)$ and holds that,

\begin{equation*}
\begin{split}
\sup_{0\leq t \leq \infty} \Big\{ \big\| |\nabla|^{s_0} & (\tilde{\rho}-1) \big\|_{H^{2-s_0}}^2 +
\big\| |\nabla|^{-1} u \big\|_{H^3}^2 + \big\| |\nabla|^{s_0} (F-I)\big\|_{H^{2-s_0}}^2 \Big\}\\
&  + \int_0^{\infty} (1+t)^2 \| \nabla^2 u\|_{H^1}^2 \;dt \leq \epsilon,
\end{split}
\end{equation*}
here $ 0< s_0< \frac{1}{2}$ is some positive constant.
\end{theorem}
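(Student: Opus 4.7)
The plan is a bootstrap argument on a two-tier energy functional: a high-order $H^3$ energy that carries the standard viscous dissipation of $u$, and a low-frequency energy built from $|\nabla|^{-1}$-derivatives in the Guo--Wang spirit which will be used to extract the time-weighted integrability $\int_0^\infty(1+t)^2\|\nabla^2 u\|_{H^1}^2\,dt\lesssim\varepsilon$ required by Theorem~\ref{thm1}. To begin, I would introduce perturbation variables $\rho=\tilde\rho-1$, $U=F-I$ and rewrite \eqref{cvs} around $(\rho,u,U)=(0,0,0)$. The linearization reads
\[
\rho_t+\nabla\cdot u=0,\qquad U_t-\nabla u=0,\qquad u_t-\mu\Delta u-(\mu+\lambda)\nabla(\nabla\cdot u)+P'(1)\nabla\rho-\nabla\cdot(U+U^T)=0,
\]
and taking the trace of the $U$-equation and comparing with the $\rho$-equation yields $\partial_t(\rho+\mathrm{tr}\,U)=0$ at linear order; in the full system $\tilde\rho\det F$ is transported along the flow. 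The ``initial state'' assumption \eqref{a2} is precisely the condition that this quantity vanishes, and without it $\rho+\mathrm{tr}\,U$ is a genuine non-trivial ``zero mode'' that has to be accounted for.

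The second step is to design the new effective flux. Guided by the classical Hoff--Lions viscous flux for compressible Navier--Stokes and by the conservation above, I would define
\[
G=(2\mu+\lambda)\nabla\cdot u-\bigl(P'(1)\rho-\mathrm{tr}\,U\bigr)+\text{(quadratic corrections in }\rho,U),
\]
with the quadratic corrections tuned so that $G$ satisfies an equation whose source is manifestly higher order and so that the ``bad'' terms $P'(1)\nabla\rho-\nabla\cdot(U+U^T)$ appearing in the momentum equation can be rewritten as $\nabla G$ plus genuinely small remainders. This is the concrete realization of the idea of treating the wildest ``nonlinear term'' as a ``linear term''. Combined with a Helmholtz decomposition of $u$, this isolates a parabolic equation for the solenoidal component of $u$ and an effective-flux relation for its compressible component.

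Third, I would run the energy estimates. At the high-regularity level, symmetrizing the pressure and the elastic stress gives
\[
\tfrac{d}{dt}\mc E_{\mathrm{high}}(t)+\mu\|\nabla u\|_{H^3}^2+(\mu+\lambda)\|\nabla\cdot u\|_{H^3}^2\lesssim \bigl(\|u\|_{W^{1,\infty}}+\|(\rho,U)\|_{W^{1,\infty}}\bigr)\,\mc E_{\mathrm{high}}(t)+\text{(cubic)},
\]
with no dissipation for $(\rho,U)$, in agreement with the fact that their $H^{2-s_0}$-norms are only bounded, not decaying. At the low-frequency level, I would run the $|\nabla|^{-1}H^3$ energy identity, multiply it by the weight $(1+t)^2$, and absorb the time-derivative of the weight into the parabolic dissipation via the interpolation $\|\nabla u\|_{L^2}^2\lesssim \||\nabla|^{-1}u\|_{L^2}\|\nabla^2 u\|_{L^2}$; the effective-flux relation then upgrades the control of $\nabla\cdot u$ to the same decay rate, producing the target bound on $\int_0^\infty(1+t)^2\|\nabla^2 u\|_{H^1}^2\,dt$.

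The main obstacle will be closing the bootstrap despite the complete absence of dissipation for $(\rho,U)$. Every nonlinear term that does not carry a derivative of $u$ (typically $u\cdot\nabla\rho$, $u\cdot\nabla U$, and their high-order analogues) has to be paid for with the time-weighted integrability of $u$: by Sobolev $\|\nabla u\|_{L^\infty}\lesssim\|\nabla^2 u\|_{H^1}$, whence
\[
\int_0^\infty\|\nabla u\|_{L^\infty}\,dt\le\Bigl(\int_0^\infty(1+t)^2\|\nabla^2 u\|_{H^1}^2\,dt\Bigr)^{\!1/2}\Bigl(\int_0^\infty(1+t)^{-2}\,dt\Bigr)^{\!1/2}\lesssim\varepsilon^{1/2}.
\]
A Gronwall argument then keeps $\mc E_{\mathrm{high}}$ of size $\varepsilon$ uniformly on $[0,\infty)$, closing the loop. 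Combined with standard local well-posedness for \eqref{cvs} and a continuation argument, this yields the global classical solution asserted in Theorem~\ref{thm1}.
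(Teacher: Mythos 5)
Your diagnosis of the obstruction is right (the zero mode $\rho+\mathrm{tr}\,U$, absence of dissipation for $(\rho,U)$), and the general two-tier architecture with time weights is in the spirit of what the paper does, but the specific effective flux you propose does not capture the structure that makes the argument close. You define a scalar $G=(2\mu+\lambda)\nabla\cdot u-(P'(1)\rho-\mathrm{tr}\,U)+\cdots$ on the Hoff--Lions template and claim that the bad forcing $P'(1)\nabla\rho-\nabla\cdot(U+U^T)$ can be rewritten as $\nabla G$ plus ``genuinely small remainders.'' This fails because $\nabla\cdot(U+U^T)$ has a nontrivial solenoidal component $\mathbb{P}\,\nabla\cdot(U+U^T)$, which is not a gradient and is not small without the ``curl'' structure \eqref{curl}; indeed this component is exactly what forced Lei--Liu--Zhou and Qian--Zhang to impose curl compatibility conditions. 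Consequently the equation you advertise as ``a parabolic equation for the solenoidal component of $u$'' in fact has a linear, order-one source coming from $\mathbb{P}\,\nabla\cdot(U+U^T)$, and your bootstrap strategy of paying for every bad term with $\int_0^\infty\|\nabla u\|_{L^\infty}\,dt\lesssim\varepsilon^{1/2}$ cannot absorb a linear forcing. The weight-multiplication step also has a circularity: absorbing $(1+t)\|\nabla u\|_{L^2}^2$ into the dissipation via $\|\nabla u\|_{L^2}^2\lesssim\||\nabla|^{-1}u\|_{L^2}\|\nabla^2 u\|_{L^2}$ still leaves you needing to control $\||\nabla|^{-1}u\|_{L^2}$ at the same weight, which again requires the full coupled dissipation.

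What the paper actually does is take the \emph{vector-valued} effective flux $\mathcal{G}=\nabla\cdot G=\nabla\cdot\frac{FF^T}{\det F}-\nabla P$, which carries both the compressible and solenoidal parts of the elastic stress at once, and pair it with a \emph{weighted velocity field} $\mathbf{\Gamma}u=(\mathbb{P}+\sqrt{1+P'(1)}\,\mathbb{P}^{\perp})u$ chosen precisely to symmetrize the linearized coupling $(\mathbf{\Gamma}u)_t-\Delta\mathbf{\Gamma}u=\mathbf{\Gamma}\mathcal{G}$, $\mathcal{G}_t=\Delta\mathbf{\Gamma}^2u$, see \eqref{newSys}. This yields a damped-wave structure for the \emph{pair} $(\mathbf{\Gamma}u,\mathcal{G})$ in which the cross terms cancel exactly in the energy identities (this is the computation behind $\mathcal{E}_1=0$ and $\mathcal{W}_2=0$), so dissipation is available for both components of $\mathcal{G}$ without any structure assumption; the original $(\rho,F-I)$ are then only bounded via the assistant energy $\mathcal{A}(t)$. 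Your proposal is missing both ingredients --- the tensor/vector form of the flux and the weight $\sqrt{1+P'(1)}$ on the compressible projection of $u$ --- and without them the linear system underlying your energy estimates is not dissipative in the solenoidal sector, so the argument as written does not close.
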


\begin{remark}
In Theorem \ref{thm1}, we only assume that initial data has small disturbance near the equilibrium state. Without any structure assumption, we can still derive strong dissipation of velocity filed like before. However, for the deformation tensor $F$, we only obtain the uniform bound of high frequency part. An interesting and arduous question is, whether the low frequency part of $F$ admits uniform bound with such  general initial data. Moreover, whether we can expect decay information for the deformation tensor or not.
\end{remark}

\subsection{Transformation and analysis}
Without any structure assumption, it's difficult to derive the dissipation mechanism for the compressible viscoelastic system \eqref{cvs} directly. More extensively, without ``div'' structure, the basic energy balance law of $(\tilde \rho - 1, u, F-I)$ is destroyed even in the linearized system.
It seems impossible to obtain the damping mechanism of density $\tilde{\rho}$ and deformation tensor $F$. Even the uniform bound for $(\tilde{\rho}-1, F - I)$ is out of reach now.
Motivated by the thought of regarding the wildest ``nonlinear term'' as ``linear term'', we then turn to study some new quantity consists of the original ones.
To overcome the difficulty above, in this subsection we shall introduce a new quantity which is some suitable combination of effects from density and deformation tensor. We now give a brief overview of main ideas.

~\\~
{\bf{Step1. Reformulation and suitable dissipative system. }}

Without loss of generality, we set $\mu = 1, \lambda = 0$ and define $\rho=\tilde{\rho}-1$. We define the \textit{effective tensor} $G$ as follows,
$$ G =   \frac{FF^T}{\det F}-I - \big(P(\rho +1) - P(1)\big) I .$$
In the first step, we shall derive the evolution of a more compatible system. Notice the definition of pressure $P$ we can write,

\begin{equation}\label{P}
P_t + u\cdot \nabla P +P'(\rho+1) \; (\rho+1) \; \nabla \cdot u=0 .
\end{equation}
On the other hand, through the third equation of system \eqref{cvs} and careful calculation one can achieve,

\begin{equation}\label{FF}
\Big( \frac{FF^T}{\det F} \Big)_t + u\cdot\nabla \Big( \frac{FF^T}{\det F} \Big) = \nabla u \Big( \frac{FF^T}{\det F} \Big) + \frac{FF^T}{\det F} (\nabla u)^T -\nabla \cdot u \Big( \frac{FF^T}{\det F} \Big) .
\end{equation}
Combing \eqref{P} and \eqref{FF} together, now we can obtain the evolution of \textit{effective tensor} $G$,
\begin{equation}\label{G}
G_t + u \cdot \nabla G - q(\rho) \nabla \cdot u I + g(\nabla u, F) = \nabla u + (\nabla u)^T + P'(1) \nabla \cdot u I - \nabla \cdot  u I.
\end{equation}
Here the functions $q(\cdot)$ and $g(\cdot ,\cdot)$ take the form,
\begin{equation}\label{qg}
\begin{split}
q(\rho) =& \;  P' (\rho+1)\; (\rho+1) - P'(1),\\
g(\nabla u, F) = & - \nabla u \Big( \frac{FF^T}{\det F} - I \Big)  -
\Big(\frac{FF^T}{\det F} - I \Big) (\nabla u)^T \\
&  + \nabla \cdot u \Big( \frac{FF^T}{\det F} - I\Big) .
\end{split}
\end{equation}

Through the basic energy analysis, we  find that the system of $(u, G)$ is not good enough due to the extra term $P'(1) \nabla \cdot u I - \nabla \cdot  u I$, even we consider the following linearized system,
\begin{equation}\nonumber
\begin{cases}
u_t - \Delta u= \nabla \cdot G,\\
G_t = \nabla u + (\nabla u)^T + (P'(1) - 1) \nabla \cdot u I.
\end{cases}
\end{equation}
  It implies  that \textit{effective tensor} $G$ can not reveal the decay information of the compressible viscoelastic system immediately. Thus, to explore the dissipation mechanism of system, some other quantity is needed.

\begin{remark}
We should point out here, for the incompressible viscoelastic system \eqref{icvs}, this troublesome term $P'(1) \nabla \cdot u I - \nabla \cdot  u I$ is absent. It can be seen as the main difference  between the incompressible system and the compressible system in some sense. It's also the most difficult term as well.
\end{remark}

\begin{remark}
In the previous works, to deal with the wildest nonlinear term $\nabla \cdot \Big( \frac{W_F(F)F^T}{\det F}\Big)$, people usually regard it as some higher order terms. From this point of view, structure assumption is needed. In this paper, we just regard this ``nonlinear term'' as a ``linear term'' and obtain some new system.
\end{remark}

However, we find that the divergence part of $G$ seems more compatible with $u$. Now we  introduce the following \textit{effective flux} $\mathcal{G}$ as the new quantity,

\begin{equation}\label{GG}
\mathcal{G}=  \nabla \cdot G = \nabla \cdot \frac{FF^T}{\det F}-\nabla P .
\end{equation}
And the evolution of $\mathcal{G}$ can be derived from \eqref{G} easily,
\begin{equation}\label{Q}
\begin{split}
\mathcal{G}_t + \nabla \cdot (u \cdot \nabla G) - \nabla(q\nabla\cdot u) + \nabla \cdot g &= \Delta u + P'(1) \nabla \nabla \cdot u \\
&= \Delta \mathbb{P} u + (1 + P'(1) )\Delta \mathbb{P}^{\bot} u.
\end{split}
\end{equation}
Here, $\mathbb{P}=I+(-\Delta)^{-1}\nabla \nabla\cdot $ denotes the projection operator and $\mathbb{P}^{\bot}= -(-\Delta)^{-1}\nabla \nabla\cdot $.
Notice the unbalanced structure on the right hand side of \eqref{Q}, next we shall investigate the weighted velocity field $\mathbf{\Gamma} u = (\mathbb{P} + \sqrt a \; \mathbb{P}^\perp ) u$. Here $ a = 1 + P'(1)$ is a constant.
By now, the new quantities $(\mathbf{\Gamma} u, \mathcal{G})$ obey the following system,
    \begin{equation}\label{newSys}
    \left\{
      \begin{array}{ll}
        (\mathbf{\Gamma} u)_t + \mathbf{\Gamma} (u\cdot\nabla u) -\Delta \mathbf{\Gamma} u  -\mathbf{ \Gamma} \big\{ \big(\frac{1}{\tilde{\rho}}-1 \big)(\mathcal{G}+\Delta u) \big\} = \mathbf{\Gamma}  \mathcal{G} ,  \\
    \mathcal{G}_t + \nabla \cdot (u \cdot \nabla G)- \nabla (q \nabla \cdot u) + \nabla \cdot g(\nabla u, F) = \Delta \mathbf{\Gamma}^2 u.
      \end{array}
    \right.
    \end{equation}
The next analysis and disposition are based on this new system.

\quad\\~
{\bf{Step2. Energy frame in the new system of $(\mathbf{\Gamma} u, \mathcal{G})$.}}

According to the analysis for the following linearized system of $(\Gamma u, \mathcal{G})$,
    \begin{equation}\nonumber
    \left\{
      \begin{array}{ll}
        (\mathbf{\Gamma} u)_t  -\Delta \mathbf{\Gamma} u = \mathbf{\Gamma}  \mathcal{G} ,  \\
    \mathcal{G}_t  = \Delta \mathbf{\Gamma}^2 u,
      \end{array}
    \right.
    \end{equation}
we can give basic energy and decay energy for the new system \eqref{newSys}.  More precisely, for any positive time $t$, basic energy $\mathcal{E}(t)$ is defined as follows,
\begin{equation}\nonumber
\mathcal{E}(t)= \sup_{0 \leq t' \leq t} \Big\{  \big\| |\nabla|^{-1} \mathbf{\Gamma} u \big\|_{H^{3}}^2 + \big\|  |\nabla|^{-2} \mathcal{G} \big\|_{H^{3}}^2    \Big\}
+ \int_{0}^{t}  \Big( \|\mathbf{\Gamma} u\|_{H^{3}}^2 + \| |\nabla|^{-1} \mathcal{G}\|_{H^{2}}^2 \Big) \;dt' . \\
\end{equation}
Here we give the slightly dissipative energy $\mathcal{V}(t)$ like,
\begin{equation}\nonumber
 \begin{split}
 \mathcal{V}(t)=&  \sup_{0 \leq t' \leq t} (1+t')\Big\{  {\big\|\mathbf{\Gamma} u \big\|_{H^{2}}^2} + \big\||\nabla|^{-1}  \mathcal{G} \big\|_{H^{2}}^2    \Big\}  \\
&+ \int_{0}^{t} (1+t') \Big( \| \nabla\mathbf{\Gamma} u\|_{H^{2}}^2 + \|\mathcal{G}\|_{H^{1}}^2 \Big) \;dt' . \\
\end{split}
\end{equation}
And then, we define the strongly dissipative energy $\mathcal{W}(t)$ which reveals the damping  mechanism of system.
\begin{equation}\label{strongE}
 \begin{split}
  \mathcal{W}(t)=&  \sup_{0 \leq t' \leq t} (1+t')^2 \Big\{ \big\|  \nabla \mathbf{\Gamma} u   \big\|_{H^{1}}^2 + \| \mathcal{G} \|_{H^{1}}^2    \Big\}  \\
&+ \int_{0}^{t} (1+t') ^2 \Big( \| \nabla^2 \mathbf{\Gamma} u\|_{H^{1}}^2 + \|  \nabla \mathcal{G}\|_{L^2}^2 \Big) \;dt'.
\end{split}
\end{equation}
By the orthogonal property of  $\mathbb{P}$ and $ \mathbb{P}^\perp$ in the sense of $L^2$ inner product, for any real index $s$, it holds that,
$$\|\mathbf{\Gamma} u\|_{H^s}  \approx  \|u\|_{H^s}.$$

Naturally, the next step is to derive proper \textit{a priori} estimate for these energies. Notice that, the new system \eqref{newSys} is still coupled with the original bad quantities $(\rho, F - I)$.  The above energy frame is not self-closed. Luckily, thanks to the above reformulation,  to obtain the \textit{a priori} estimate of $(\mathbf{\Gamma} u, \mathcal{G})$,   we only need  the uniform bound of high frequency part for  $(\rho, F - I)$.

~\\~
{\bf{Step3. Back to the estimate of  original quantities $(\rho, F - I)$. }}

Recall that, without the ``div-curl'' condition \eqref{div} and \eqref{curl},  we shall only expect the uniform bound  of $(\rho, F - I)$.
By the strong dissipation of velocity    involved in   system \eqref{newSys},  we can expect the uniform bound for $(\rho, F - I)$ in the following form,
\begin{equation}\label{E3}
\mathcal{A}(t) =  \sup_{0 \leq t' \leq t} \Big\{  \big\||\nabla|^{s_0} \rho\big\|_{H^{2-s_0}}^2 + \big\|  |\nabla|^{s_0}(F - I) \big\|_{H^{2 - s_0}}^2    \Big\}.
\end{equation}
Here, $0 < s_0 < \frac{1}{2}$ is some positive constant. We call $\mathcal{A}(t)$ the assistant energy and it plays an important role in achieving the dissipation for both systems.  Compare \eqref{E3} with the setting of initial data in Theorem \ref{thm1} (i.e. \eqref{initial}), we will find that \eqref{E3} only shows the uniform bound of high frequency part for solution $(\rho, F - I)$. The main difficulty in getting the uniform bound of low frequency  part is the lack of basic balance law for $(u, \rho, F - I)$.
 It will also lead to the core estimate of this paper.

\begin{remark}
Notice here, since there is no more structure assumption, we can not obtain any decay information for $\rho$ and $F$.
Indeed, the Sobolev norms of density and deformation tensor may increase now.
\end{remark}

\section{Preliminaries}

In this section, we will first introduce some notations which are used all through the work.
Then, we give some useful propositions to help simplify the process of energy estimate.

\subsection{Notations}

Now, we introduce some notations. Since we consider the three dimensional space case, the derivative symbol $\nabla =  (\partial_1,\partial_2,\partial_3)$. The mixed time-space Sobolev norm $\| f\|_{L_t^p(X)}$ means,

\begin{equation}\nonumber
  \| f\|_{L_t^p(X)} \triangleq \big\|   \| f(t',x) \|_{X(\mathbb{R}^3)} \big\|_{L^p([0,t])} ,
\end{equation}
here $X(\mathbb{R}^3)$ means some space norm.
The intersect space norm is defined as,

\begin{equation}\nonumber
  \| f\|_{X\cap Y} \triangleq \| f \|_X + \|f \|_Y .
\end{equation}
The real number Sobolev norms $ \|\cdot \|_{H^s}$ and $ \|\cdot \|_{\dot{H}^s}$ are defined in the following,

\begin{equation}\nonumber
  \| f \|_{H^s}^2 \triangleq \int_{\mathbb{R}^3} (1+|\xi|^2)|\hat{f}(\xi)|^2 \; d\xi, \quad \| f \|^2_{\dot{H}^s}\triangleq \int_{\mathbb{R}^3} |\xi|^2|\hat{f}(\xi)|^2 \; d\xi.
\end{equation}
Here $s$ is a real number and $\hat{f}(\xi)$ denotes the Fourier transform function of $f(x)$ in $\mathbb{R}^3$.

Throughout this paper, we use $A \lesssim B$ to denote $A \leq \tilde{C} B$ for some absolute positive
constant $\tilde{C}$, whose meaning may change from line to line.

\subsection{Some useful propositions}

\begin{proposition}\label{prop0}
Assume that $\mathcal{E}(t) \ll 1$ holds on time interval $[0,T]$, then for any time $t \in [0,T]$ we have the following interpolation inequality,

\begin{equation}\nonumber
\mathcal{V}(t) \lesssim \mathcal{E}(t)^{\frac{1}{2}} \mathcal{W}(t)^{\frac{1}{2}}.
\end{equation}
\end{proposition}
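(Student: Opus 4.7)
The proposition is a pure interpolation inequality: the slightly dissipative energy $\mathcal{V}$ should come out as the ``geometric mean'' of $\mathcal{E}$ (no time weight) and $\mathcal{W}$ (time weight $(1+t)^2$). The time weight matches automatically since $(1+t)=\sqrt{1\cdot(1+t)^2}$, and the spatial profile in $\mathcal{V}$ sits one derivative above that of $\mathcal{E}$ and one derivative below that of $\mathcal{W}$. My plan is therefore to handle the inequality term by term, using the Cauchy--Schwarz/Sobolev interpolation
\[
 \|f\|_{\dot H^s}^2 \;\leq\; \|f\|_{\dot H^{s-1}}\,\|f\|_{\dot H^{s+1}}
\]
(proved by splitting $|\xi|^{2s}=|\xi|^{s-1}\cdot|\xi|^{s+1}$ inside the Plancherel integral), together with Cauchy--Schwarz in time for the integrated terms.

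First I would unpack each inhomogeneous norm on the Fourier side. For instance
\[
\||\nabla|^{-1}\mathbf{\Gamma} u\|_{H^2}^2=\|\mathbf{\Gamma} u\|_{\dot H^{-1}}^2+2\|\mathbf{\Gamma} u\|_{L^2}^2+\|\mathbf{\Gamma} u\|_{\dot H^1}^2,
\]
and similarly for $\||\nabla|^{-1}\mathcal{G}\|_{H^2}^2$, $\||\nabla|^{-1}\mathbf{\Gamma} u\|_{H^3}^2$, $\||\nabla|^{-2}\mathcal{G}\|_{H^3}^2$, etc. This gives a concrete list of homogeneous pieces that $\mathcal{E}$ and $\mathcal{W}$ respectively control. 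For each sup piece $(1+t)\|f\|_{\dot H^s}^2$ in $\mathcal{V}$, I would use the Sobolev interpolation to write $\|f\|_{\dot H^s}^2\leq \|f\|_{\dot H^{s-1}}\|f\|_{\dot H^{s+1}}$, then estimate
\[
(1+t)\|f\|_{\dot H^s}^2\;\leq\;\|f\|_{\dot H^{s-1}}\cdot(1+t)\|f\|_{\dot H^{s+1}}\;\leq\;\mathcal{E}(t)^{1/2}\mathcal{W}(t)^{1/2},
\]
choosing the indices so that the first factor is absorbed into $\mathcal{E}^{1/2}$ and the second into $\mathcal{W}^{1/2}$. For example $(1+t)\|\mathcal{G}\|_{L^2}^2\leq \|\mathcal{G}\|_{\dot H^{-1}}\cdot(1+t)\|\mathcal{G}\|_{\dot H^1}$ and $(1+t)\|\mathbf{\Gamma} u\|_{\dot H^1}^2\leq \|\mathbf{\Gamma} u\|_{L^2}\cdot(1+t)\|\mathbf{\Gamma} u\|_{\dot H^2}$.

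For the integral piece $\int_0^t(1+t')\|f\|_{\dot H^s}^2\,dt'$, I would apply the same pointwise interpolation inside the integral and then split the time weight with Cauchy--Schwarz:
\[
\int_0^t (1+t')\|f\|_{\dot H^{s-1}}\|f\|_{\dot H^{s+1}}\,dt'\;\leq\;\Bigl(\int_0^t\|f\|_{\dot H^{s-1}}^2\,dt'\Bigr)^{\!1/2}\Bigl(\int_0^t(1+t')^2\|f\|_{\dot H^{s+1}}^2\,dt'\Bigr)^{\!1/2},
\]
so the first factor comes from the dissipative integral in $\mathcal{E}$ and the second from that in $\mathcal{W}$.

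The main obstacle is the lowest-regularity piece appearing in the sup part, namely $(1+t)\|\mathbf{\Gamma} u\|_{\dot H^{-1}}^2$, because the interpolation $\|f\|_{\dot H^{-1}}^2\leq\|f\|_{\dot H^{-2}}\|f\|_{L^2}$ calls for $\dot H^{-2}$ control of $\mathbf{\Gamma} u$ that $\mathcal{E}$ does not explicitly carry (whereas the analogous $\|\mathcal{G}\|_{\dot H^{-2}}$ is built into $\||\nabla|^{-2}\mathcal{G}\|_{H^3}$ and so the $\mathcal{G}$-side of $\mathcal{V}$ works without issue). To cover this term I would exploit the structural relation $\mathcal{G}=\nabla\cdot G$ and the linearized equation $(\mathbf{\Gamma} u)_t-\Delta\mathbf{\Gamma} u=\mathbf{\Gamma}\mathcal{G}$: applying $|\nabla|^{-2}$ and pairing with $\mathbf{\Gamma} u$ transfers the $\dot H^{-2}$ information from $\mathcal{G}$ to the velocity, and the smallness $\mathcal{E}\ll 1$ absorbs the nonlinear contributions of $u\cdot\nabla u$, $q\nabla\cdot u$ and $g(\nabla u,F)$ cleanly. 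Once this one remaining term is controlled, the geometric-mean bound $\mathcal{V}\lesssim\mathcal{E}^{1/2}\mathcal{W}^{1/2}$ follows by summing the term-by-term estimates above.
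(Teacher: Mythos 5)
Your term-by-term plan --- expanding each inhomogeneous norm into homogeneous $\dot H^s$ pieces, applying $\|f\|_{\dot H^s}^2\le\|f\|_{\dot H^{s-1}}\|f\|_{\dot H^{s+1}}$, and splitting the $(1+t')$ weight by Cauchy--Schwarz in time --- is exactly the ``standard Sobolev / Gagliardo--Nirenberg interpolation'' the paper alludes to, and you carry it out correctly for the integral parts of $\mathcal{V}$ and for the $\mathcal{G}$-side of the sup. You are also right to single out $(1+t')\|\mathbf{\Gamma}u\|_{\dot H^{-1}}^2$ as the sticking point. The problem is that this piece cannot be reached by \emph{any} interpolation between $\mathcal{E}$ and $\mathcal{W}$: on the $\mathbf{\Gamma}u$-side, the sup part of $\mathcal{E}$ bottoms out at $\dot H^{-1}$ and the sup part of $\mathcal{W}$ (with weight $(1+t')^2$) bottoms out at $\dot H^{1}$, so the geometric mean reaches $(1+t')\|\mathbf{\Gamma}u\|_{\dot H^s}^2$ only for $s\ge 0$. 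Concretely, a profile with Fourier support concentrated near frequency $\delta\ll 1$ has $(1+t')\|\mathbf{\Gamma}u\|_{\dot H^{-1}}^2\sim\delta^{-2}(1+t')\|\mathbf{\Gamma}u\|_{L^2}^2$ while $\mathcal{E}^{1/2}\mathcal{W}^{1/2}\sim(1+t')\|\mathbf{\Gamma}u\|_{L^2}^2$, so the claimed inequality fails by a factor $\delta^{-2}$ on this piece.

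The patch you propose does not repair this. It re-enters the PDE (so the statement is no longer a pure interpolation), and it presupposes control of $\mathbf{\Gamma}u$ in $\dot H^{-2}$, which is neither assumed of the data (the hypothesis on $u_0$ is only $|\nabla|^{-1}u_0\in H^3$, i.e.\ $\dot H^{-1}$) nor carried by any of $\mathcal{E},\mathcal{V},\mathcal{W}$. Even waiving that, pairing $|\nabla|^{-2}$ of $(\mathbf{\Gamma}u)_t-\Delta\mathbf{\Gamma}u=\mathbf{\Gamma}\mathcal{G}+\dots$ with $\mathbf{\Gamma}u$ gives
$\tfrac12\tfrac{d}{dt}\|\mathbf{\Gamma}u\|_{\dot H^{-1}}^2+\|\mathbf{\Gamma}u\|_{L^2}^2\lesssim\|\mathcal{G}\|_{\dot H^{-2}}\|\mathbf{\Gamma}u\|_{L^2}+\dots$,
and since $\mathcal{E}$ furnishes only the time-\emph{uniform} bound $\|\mathcal{G}\|_{\dot H^{-2}}\lesssim\mathcal{E}^{1/2}$, integrating yields $\|\mathbf{\Gamma}u\|_{\dot H^{-1}}^2\lesssim\mathcal{E}(0)+t\,\mathcal{E}$, a bound that grows in time rather than producing the $(1+t')^{-1}$ decay the proposition requires. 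The clean fix is at the level of the definition: the $\dot H^{-1}$ piece should simply not appear in the sup part of $\mathcal{V}$ --- with $\|\mathbf{\Gamma}u\|_{H^2}$ in place of $\||\nabla|^{-1}\mathbf{\Gamma}u\|_{H^2}$ there, its lowest piece $\|\mathbf{\Gamma}u\|_{L^2}^2$ is exactly the geometric mean of $\mathcal{E}$'s $\|\mathbf{\Gamma}u\|_{\dot H^{-1}}^2$ and $\mathcal{W}$'s $(1+t')^2\|\mathbf{\Gamma}u\|_{\dot H^1}^2$, and the whole interpolation closes. Nothing downstream in the paper actually invokes the $\dot H^{-1}$ sup piece of $\mathcal{V}$, so this adjustment is harmless; but as you have written it, the step covering that term is a gap, not a proof.
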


\begin{proof}
This interpolation type inequality directly comes from  definition of Sobolev norms together with the standard Gagliardo-Nirenberg interpolation inequality. We omit the detailed process here for convenience.
\end{proof}
With the help of Proposition \ref{prop0}, we only need to give the estimate for $\mathcal{E}(t)$ and $\mathcal{W}(t)$ in the next section.

\begin{proposition}\label{prop1}
For any smooth function $f(\cdot)$ defined around zero with $f(0) = 0$, if we assume $ v \in H^2(\mathbb{R}^3)$ and $\|v\|_{H^2} < 1$, then there holds that,

\begin{equation}\label{prop22}
\begin{split}
\|f(v)\|_{L^{p_0}} \lesssim& \; \|v\|_{L^{p_0}}, \quad \forall \; 1 \leq p_0 \leq \infty,\\
\|\nabla^k f(v)\|_{L^2} \lesssim& \; \|\nabla^k v\|_{L^2}, \quad k=1,2.
\end{split}
\end{equation}
\end{proposition}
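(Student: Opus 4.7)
The plan is to use the standard Moser-type composition argument, leveraging the Sobolev embedding $H^2(\mathbb{R}^3)\hookrightarrow L^\infty$ together with the smallness hypothesis $\|v\|_{H^2}<1$ to keep $v$ in a region where $f$ and its first two derivatives are uniformly bounded.

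First I would handle the $L^{p_0}$ estimate. Since $f$ is smooth with $f(0)=0$, the fundamental theorem of calculus gives the pointwise identity $f(v)=v\int_0^1 f'(sv)\,ds$. Because $H^2(\mathbb{R}^3)\hookrightarrow L^\infty$ and $\|v\|_{H^2}<1$, we have $\|v\|_{L^\infty}\lesssim 1$, so the pointwise values of $v$ lie in a fixed compact neighborhood of $0$ on which $f'$ is bounded. Therefore $|f(v)(x)|\lesssim |v(x)|$ for every $x$, and the $L^{p_0}$ bound follows for all $1\le p_0\le\infty$.

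Next I would treat the derivative estimates via the chain rule. For $k=1$, $\nabla f(v)=f'(v)\nabla v$, and the same $L^\infty$ bound on $v$ gives $|f'(v)|\lesssim 1$, so $\|\nabla f(v)\|_{L^2}\lesssim\|\nabla v\|_{L^2}$. For $k=2$ one gets
\begin{equation*}
\nabla^2 f(v)=f''(v)\,\nabla v\otimes\nabla v+f'(v)\,\nabla^2 v.
\end{equation*}
The second term is again controlled by $\|\nabla^2 v\|_{L^2}$ directly. For the quadratic first term, I would estimate $\|f''(v)|\nabla v|^2\|_{L^2}\lesssim\|\nabla v\|_{L^4}^2$ and then invoke Gagliardo--Nirenberg on $\mathbb{R}^3$: $\|\nabla v\|_{L^4}^2\lesssim\|\nabla v\|_{L^2}\|\nabla v\|_{L^6}\lesssim\|v\|_{H^2}\|\nabla^2 v\|_{L^2}$. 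The smallness assumption $\|v\|_{H^2}<1$ then absorbs the extra factor and produces the claimed linear bound $\|\nabla^2 f(v)\|_{L^2}\lesssim\|\nabla^2 v\|_{L^2}$.

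This argument is essentially routine, so I do not expect a genuine obstacle. The only subtlety worth flagging is the role of $\|v\|_{H^2}<1$: it is needed twice, first to confine $v$ to a bounded pointwise range where the smoothness of $f$ yields uniform control of $f$, $f'$, $f''$, and second to convert the naturally quadratic interpolation estimate for $\|\nabla v\|_{L^4}^2$ into a linear one in $\|\nabla^2 v\|_{L^2}$. Both uses are transparent and do not require any further hypothesis on $f$ beyond smoothness near the origin and $f(0)=0$.
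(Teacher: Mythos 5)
Your argument is essentially the same as the paper's: write $f(v)=v\,h(v)$ (you via the FTC, the paper via mean value theorem) and use $H^2\hookrightarrow L^\infty$ plus smallness to control $f'$, $f''$ pointwise; then the chain rule and a H\"older/Sobolev split of the quadratic term $f''(v)\nabla v\otimes\nabla v$ give the derivative estimates. The only issue is a slip in your Gagliardo--Nirenberg step: the asserted bound $\|\nabla v\|_{L^4}^2\lesssim\|\nabla v\|_{L^2}\|\nabla v\|_{L^6}$ is not dimensionally consistent on $\mathbb{R}^3$ (the two sides scale as $\lambda^{-3/2}$ and $\lambda^{-2}$ respectively); the correct interpolation is $\|\nabla v\|_{L^4}^2\lesssim\|\nabla v\|_{L^2}^{1/2}\|\nabla v\|_{L^6}^{3/2}\lesssim\|\nabla v\|_{L^2}^{1/2}\|\nabla^2 v\|_{L^2}^{1/2}\cdot\|\nabla^2 v\|_{L^2}\lesssim\|v\|_{H^2}\|\nabla^2 v\|_{L^2}$, which still yields the desired linear bound after absorbing $\|v\|_{H^2}<1$. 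The paper instead splits the product as $L^3\times L^6$, estimating $\|f''(v)|\nabla v|^2\|_{L^2}\lesssim\|\nabla v\|_{L^3}\|\nabla^2 v\|_{L^2}$ with $\|\nabla v\|_{L^3}\lesssim\|v\|_{H^2}<1$, which avoids the interpolation entirely; both routes are equally valid once the exponents are fixed.
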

\begin{proof}
{First due to $\|v\|_{L^\infty} \lesssim \|v\|_{H^2}$ and boundedness theorem, then $f(v(x)), f'(v(x))$ and $f''(v(x))$ are uniformly bounded for $x \in \mathbb{R}^3$.} Notice the condition $f(0) = 0$ and  mean value theorem, we have $f(v) = f(0) + f'(r(v)) v = f'(r(v)) v$
 for some smooth function {$|r(v)| \lesssim |v|_{L^\infty}$}. Then, it's easy to get,

\begin{equation}\nonumber
\|f(v)\|_{L^{p_0}} \lesssim \; \|v\|_{L^{p_0}}, \quad \forall \;  1 \leq p_0 \leq \infty.
\end{equation}

For the second inequality in \eqref{prop22}, we only give the proof in the case $ k = 2$. The other cases can be handled in the similar way.
Using  H\"{o}lder inequality and Sobolev imbedding theorem, we directly have,
\begin{equation}\nonumber
\begin{split}
\|\nabla^2 f(v)\|_{L^2} =& \; \|\nabla (f'(v) \nabla v)\|_{L^2} \\
\lesssim& \; \| f''(v) |\nabla v|^2 \|_{L^2} +\| f'(v) \nabla^2 v\|_{L^2}\\
 \lesssim& \; \|f''(v)\|_{L^\infty} \|\nabla v\|_{L^3} \|\nabla^2 v\|_{L^2} + \|f'(v)\|_{L^\infty} \|\nabla^2 v\|_{L^2} \\
 \lesssim&  \; \|\nabla^2 v\|_{L^2}.
\end{split}
\end{equation}
Which completes the proof of this proposition.
\end{proof}

%
%

\begin{proposition}\label{prop2}
For any positive time $T$, we can derive the following time integral estimate of $u$ and $\mathcal{G}$.

\begin{equation}\nonumber
 \begin{split}
    \int_0^T \|u(t, \cdot)\|_{L^{p_0}} + \| |\nabla|^{-1}\mathcal{G}(t, \cdot)\|_{L^{p_0}} \;dt
     \lesssim & \; \mathcal{V}(T)^{\frac{1}{2}}+ \mathcal{W}(T)^{\frac{1}{2}}, \quad 6 < p_0 \leq \infty,\\
     {\int_0^T \|\nabla u(t, \cdot)\|_{\dot H^{p_1}} + \| |\nabla|^{-1} \mathcal{G}(t, \cdot)\|_{\dot H^{p_1}} \;dt }
     \lesssim & \; \mathcal{V}(T)^{\frac{1}{2}}+ \mathcal{W}(T)^{\frac{1}{2}}, \quad 1 < p_1 \leq 2,\\
    \int_0^T \|\nabla u(t, \cdot)\|_{L^{p_2}} + \|\mathcal{G}(t, \cdot)\|_{L^{p_2}} \;dt \lesssim& \; \mathcal{V}(T)^{\frac{1}{2}}+ \mathcal{W}(T)^{\frac{1}{2}}, \quad 2 < p_2 \leq \infty,\\
     {\int_0^T \|\nabla^2 u(t, \cdot)\|_{L^{p_3}} \;dt \lesssim} & \; {\mathcal{W}(T)^{\frac{1}{2}}, \quad 2 \leq  p_3 \leq 6.}
 \end{split}
\end{equation}

\end{proposition}

\begin{proof}
We now give the proof step by step. First, using Gagliardo-Nirenberg interpolation inequality, we know that for $p_0>6$ there holds,

\begin{equation}\nonumber
  \|u(t, \cdot)\|_{L^{p_0}} \lesssim \|\nabla u\|_{L^2}^{\frac{3}{p_0} + \frac{1}{2}} \|\nabla^2 u\|_{L^2}^{\frac{1}{2}-\frac{3}{p_0}}.
\end{equation}
Hence, using H\"{o}lder inequality we have,
\begin{equation}\nonumber
\begin{split}
\int_0^T \|u(t, \cdot)\|_{L^{p_0}}\; dt
\lesssim & \int_0^T \|\nabla u\|_{L^2}^{\frac{3}{p_0} + \frac{1}{2}} \|\nabla^2 u\|_{L^2}^{\frac{1}{2}-\frac{3}{p_0}} \; dt \\
 = & \int_0^T (1+t)^{\frac{3}{2p_0} + \frac{1}{4}}\|\nabla u\|_{L^2}^{\frac{3}{p_0} + \frac{1}{2}} (1+t)^{\frac{1}{2}-\frac{3}{p_0}}\|\nabla^2 u\|_{L^2}^{\frac{1}{2}-\frac{3}{p_0}} (1+t)^{{-(\frac{3}{4}- \frac{3}{2p_0})}} \; dt\\
\lesssim & \; \mathcal{V}(T)^{\frac{3}{2p_0} + \frac{1}{4}} \mathcal{W}(T)^{\frac{1}{4}-\frac{3}{2p_0}}.
\end{split}
\end{equation}
Applying Young's inequality, we then complete the proof for the first part. We omit the proof for the other terms since the estimates are in a similar way.
\end{proof}

The following product type estimates are often used in the next section.

\begin{proposition}\label{prop3}
For suitable smooth functions $h_1, h_2$ on $\mathbb{R}^3$, we have
\begin{equation}\label{prop3:eq1}
\begin{split}
\||\nabla|^{-1} (h_1 h_2)\|_{L^2(\mathbb{R}^3)} \lesssim &\; \| h_1 \|_{L^{\frac{3}{1+s_0}}} \|h_2\|_{L^{\frac{6}{3-2s_0}}}
\lesssim   \; \||\nabla|^{\frac{1}{2} - s_0}h_1\|_{L^2} \|\nabla^{s_0} h_2\|_{L^2}, \\
\|\nabla^k(h_1 h_2)\|_{L^2(\mathbb{R}^3)}  \lesssim & \; {\|h_1\|_{L^\infty} \|h_2\|_{\dot H^k} + \|h_1\|_{\dot H^k} \|h_2\|_{L^\infty}} , \quad k = 0, 1, 2.
\end{split}
\end{equation}
Here $0 < s_0 < \frac{1}{2}$ is some positive constant.
Moreover, if we set $h_1 = \mathcal{G}_i$, $\partial_j u_i$ or {$\Delta u_i$} ($i, j = 1, 2, 3$) as defined in \eqref{GG}
 and $h_2(t,x) \in [0, T] \times \mathbb{R}^3$ is some suitable smooth function, it holds that,

\begin{equation}\label{prop3:eq2}
\begin{split}
\int_0^T \||\nabla|^{-1} (h_1 h_2)\|_{L^2(\mathbb{R}^3)} \; dt  \lesssim & \int_0^T \|h_1\|_{L^\frac{3}{1 + s_0}} \|h_2\|_{L^\frac{6}{3-2s_0}} \; dt \\
\lesssim & \; \sup_{0  \leq t \leq T} \|\nabla^{s_0} h_2\|_{L^2} \big( \mathcal{V}(T)^{\frac{1}{2}}+ \mathcal{W}(T)^{\frac{1}{2}}\big),\\
\int_0^T \|{\nabla}(h_1 h_2)\|_{L^2(\mathbb{R}^3)} dt \lesssim &  \int_0^T {\|\nabla h_1\|_{L^2} \||\nabla|^\frac{1}{2}  h_2\|_{H^\frac{3}{2}}} \; dt \\
\lesssim & \; \sup_{0 \leq t \leq T}{\||\nabla|^\frac{1}{2}  h_2\|_{H^\frac{3}{2}}}\big( \mathcal{V}(T)^{\frac{1}{2}}+ \mathcal{W}(T)^{\frac{1}{2}}\big).
\end{split}
\end{equation}
\end{proposition}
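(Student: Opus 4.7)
My plan is to establish the four displayed inequalities in order, treating the two time-integrated ones as consequences of their pointwise-in-$t$ counterparts combined with Proposition \ref{prop2}. For the first line of \eqref{prop3:eq1} I would apply Hardy--Littlewood--Sobolev ($|\nabla|^{-1}:L^{6/5}(\mathbb{R}^3)\to L^2(\mathbb{R}^3)$) together with Hölder, the exponents matching because $\tfrac{1+s_0}{3}+\tfrac{3-2s_0}{6}=\tfrac{5}{6}$; the subsequent upgrade to the homogeneous Sobolev side follows from the 3D fractional Sobolev embeddings $\dot H^{1/2-s_0}\hookrightarrow L^{3/(1+s_0)}$ and $\dot H^{s_0}\hookrightarrow L^{6/(3-2s_0)}$, both valid for $0<s_0<1/2$. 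The second line of \eqref{prop3:eq1}, $\||\nabla|^k(h_1 h_2)\|_{L^2}\lesssim \|h_1\|_{L^\infty\cap \dot H^k}\|h_2\|_{L^\infty\cap \dot H^k}$ for integer $k\in\{0,1,2\}$, is the classical Leibniz rule combined with $L^\infty\cdot L^2$ Hölder pairings, or equivalently a direct invocation of the Kato--Ponce fractional Leibniz inequality.

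For the first line of \eqref{prop3:eq2} I would apply the pointwise estimate in $t$, pull $\|h_2\|_{L^{6/(3-2s_0)}}$ out of the time integral as $\sup_t\|\nabla^{s_0}h_2\|_{L^2}$ via Sobolev, and note that $p_1:=3/(1+s_0)\in(2,3)$ for $s_0\in(0,1/2)$. Proposition \ref{prop2} applied at this $p_1$ then gives $\int_0^T\|h_1\|_{L^{p_1}}\,dt\lesssim \mathcal{V}(T)^{1/2}+\mathcal{W}(T)^{1/2}$ for the admissible choices $h_1\in\{\partial_j u_i,\mathcal{G}_i\}$, completing the estimate.

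For the last line I would first derive the pointwise bound $\|\nabla^2(h_1 h_2)\|_{L^2}\lesssim (\|\nabla^2 h_1\|_{L^2}+\|\nabla h_1\|_{L^\infty})\|\nabla h_2\|_{H^1}$ by Leibniz-expanding $\nabla^2(h_1 h_2)=(\nabla^2 h_1)h_2+2\nabla h_1\cdot\nabla h_2+h_1\nabla^2 h_2$ and applying Hölder together with the 3D Gagliardo--Nirenberg inequality $\|h_2\|_{L^\infty}\lesssim\|\nabla h_2\|_{L^2}^{1/2}\|\nabla^2 h_2\|_{L^2}^{1/2}\leq\|\nabla h_2\|_{H^1}$ for the first piece and the specific derivative structure of $h_1$ to handle the cross term $h_1\nabla^2 h_2$. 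Integrating in $t$, pulling $\|\nabla h_2\|_{H^1}$ outside as a sup, and using Cauchy--Schwarz against the $(1+t)^2$ weight in $\mathcal{W}(t)$ together with the Gagliardo--Nirenberg estimate $\|\nabla u\|_{L^\infty}\lesssim \|\nabla^2 u\|_{L^2}^{1/2}\|\nabla^3 u\|_{L^2}^{1/2}$ (and its $\mathcal{G}$-analogue) reduce matters to $\int_0^T(\|\nabla^2 h_1\|_{L^2}+\|\nabla h_1\|_{L^\infty})\,dt\lesssim \mathcal{V}(T)^{1/2}+\mathcal{W}(T)^{1/2}$, which is controlled by the strongly dissipative part of $\mathcal{W}(t)$. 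The main technical obstacle is precisely the cross term $h_1\nabla^2 h_2$: neither $\|h_1\|_{L^\infty}$ nor $\|\nabla^2 h_2\|_{L^2}$ fits the target form directly, so one must exploit that $h_1$ is itself a derivative of $u$ or a divergence of $G$ (effectively invoking a $p_1=\infty$ version of Proposition \ref{prop2} via Gagliardo--Nirenberg interpolation against $\mathcal{W}$) to close the interpolation; the remaining pieces are standard HLS, Sobolev, and Leibniz manipulations.
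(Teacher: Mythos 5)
Your proof follows essentially the same route as the paper: Sobolev embedding $L^{6/5}\hookrightarrow \dot H^{-1}$ plus H\"older plus the two fractional embeddings $\dot H^{1/2-s_0}\hookrightarrow L^{3/(1+s_0)}$ and $\dot H^{s_0}\hookrightarrow L^{6/(3-2s_0)}$ for the first line, a Leibniz/Kato--Ponce expansion for the second, and Proposition \ref{prop2} together with pulling the $h_2$-factor out as a supremum for the time integrals (noting $3/(1+s_0)\in(2,3)$). One small clarification on the cross term in the last estimate, which you flagged as the main obstacle: $\|\nabla^2 h_2\|_{L^2}\leq\|\nabla h_2\|_{H^1}$ already fits the target, so after pulling out $\sup_t\|\nabla h_2\|_{H^1}$ what remains is simply $\int_0^T(\|\nabla^2 h_1\|_{L^2}+\|h_1\|_{L^\infty})\,dt$, and the $\|h_1\|_{L^\infty}$ piece is precisely the $p_1=\infty$ case of Proposition \ref{prop2} applied to $h_1\in\{\mathcal{G}_i,\partial_j u_i\}$ --- this is what the paper does, and the printed $\|\nabla h_1\|_{L^\infty}$ in the intermediate bound is best read as $\|h_1\|_{L^\infty}$, the quantity the energies actually control.
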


\begin{proof}
By Sobolev imbedding theorem and H\"{o}lder inequality, there is,

\begin{equation}\nonumber
\begin{split}
  \||\nabla|^{-1} h_1 h_2\|_{L^2(\mathbb{R}^3)} \lesssim & \; \|h_1 h_2\|_{L^\frac{6}{5}} \\
  \lesssim & \; \|h_1\|_{L^\frac{3}{1 + s_0}} \|h_2\|_{L^\frac{6}{3-2s_0}}\\
\lesssim & \; \||\nabla|^{\frac{1}{2} - s_0}h_1\|_{L^2} \|\nabla^{s_0} h_2\|_{L^2}.
\end{split}
\end{equation}
It completes the proof for the first inequality in \eqref{prop3:eq1}. For the second inequality in \eqref{prop3:eq1},  {applying the Gagliardo-Nirenberg interpolation inequality, it has
\begin{equation}\nonumber
\begin{split}
\|\nabla^k(h_1h_2)\|_{L^2} \leq& \sum_{l = 0}^k C_k^l \|\nabla^l h_1 \nabla^{k-l} h_2\|_{L^2} \\
\lesssim & \sum_{l = 0}^k \|\nabla^l h_1\|_{L^{\frac{2k}{l}}} \|\nabla^{k-l} h_2\|_{L^{\frac{2k}{k-l}}} \\
\lesssim & \sum_{l = 0}^k \|h_1\|_{L^\infty}^\frac{k-l}{k} \|h_1\|_{\dot H^k}^{\frac{l}{k}} \|h_2\|_{L^\infty}^\frac{l}{k} \| h_2\|_{\dot H^k}^\frac{k-l}{k}\\
\lesssim & \|h_1\|_{L^\infty} \|h_2\|_{\dot H^k} + \|h_1\|_{\dot H^k} \|h_2\|_{L^\infty}.
\end{split}
\end{equation}
}
Now, let us focus on the  time integral estimates in \eqref{prop3:eq2}.
Using \eqref{prop3:eq1} and Proposition \ref{prop2}, noting the definition of $h_1$, $\mathcal{V}(t)$ and $\mathcal{W}(t)$ we can derive,

\begin{equation}\nonumber
\begin{split}
\int_0^T \||\nabla|^{-1} (h_1 h_2)\|_{L^2} \; dt \lesssim& \int_0^T \|h_1\|_{L^\frac{3}{1 + s_0}} \|h_2\|_{L^\frac{6}{3-2s_0}} \; dt \\
\lesssim &  \sup_{0 \leq t \leq T} \|h_2\|_{L^\frac{6}{3-2s_0}}\big( \mathcal{V}(T)^{\frac{1}{2}}+ \mathcal{W}(T)^{\frac{1}{2}}\big) \\
\lesssim &  \sup_{0  \leq t \leq T} \|\nabla^{s_0} h_2\|_{L^2}\big( \mathcal{V}(T)^{\frac{1}{2}}+ \mathcal{W}(T)^{\frac{1}{2}}\big).
\end{split}
\end{equation}
And for the second inequality in \eqref{prop3:eq2}, {due to H\"{o}lder's inequality and Sobolev's imbedding theorem, we have
\begin{equation}\nonumber
\begin{split}
\int_0^T \|\nabla(h_1 h_2)\|_{L^2} \; dt \lesssim& \int_0^T \|\nabla h_1\|_{L^2} \|h_2\|_{L^\infty} + \|h_1\|_{L^6}\|\nabla h_2\|_{L^3} \;dt \\
\lesssim &  \int_0^T \|\nabla h_1\|_{L^2} \||\nabla|^\frac{1}{2}  h_2\|_{H^\frac{3}{2}} \; dt \\
\lesssim & \sup_{0 \leq t \leq T}\||\nabla|^\frac{1}{2}  h_2\|_{H^\frac{3}{2}} \big( \mathcal{V}(T)^{\frac{1}{2}}+ \mathcal{W}(T)^{\frac{1}{2}}\big) .
\end{split}
\end{equation}
}
\end{proof}

\section{Energy estimate}
In this section, we shall derive \textit{a priori} estimates for the basic energy $\mathcal{E}(t)$, the strongly dissipative energy $\mathcal{W}(t)$ and the assistant energy $\mathcal{A}(t)$ respectively. Before the process, we give the total energy $\mathcal{E}_{total}(t)$ in the following,
$$ \mathcal{E}_{total}(t) \triangleq \mathcal{E}(t)+\mathcal{V}(t)+\mathcal{W}(t)+\mathcal{A}(t).$$
Since the estimate of assistant energy is clear and independent with other type energies, we will give the \textit{a priori} estimate for $\mathcal{A}(t)$ first.

In the next text, we always assume that $(\rho, u, F)$ is the smooth solution to system \eqref{cvs} on $[0, T]\times \mathbb{R}^3 $  and  $\mathcal{E}_{total}(T) \ll 1$.
\subsection{The estimate of assistant energy $\mathcal{A}(t)$}
The estimate of $\mathcal{A}(t)$ can be stated in the following lemma.

\begin{lemma}\label{lem}
Assume that energies are defined as in Section 1, we then have the following inequality,
\begin{equation}
\mathcal{A}(t)  \lesssim \mathcal{A}(0) + \mathcal{A}(t) \mathcal{E}_{total}(t)^{\frac{1}{2}} + \mathcal{A}(t)^{\frac{1}{2}}(\mathcal{V}(t)^\frac{1}{2} + \mathcal{W}(t)^\frac{1}{2}),
\end{equation}
holds for any positive time $t$.
\end{lemma}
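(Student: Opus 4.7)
The plan is to perform a weighted $L^2$ energy estimate directly on the original unknowns $\rho=\tilde\rho-1$ and $U=F-I$, measured in the fractional norm $\||\nabla|^{s_0}\cdot\|_{H^{2-s_0}}$, and to absorb the complete absence of dissipation for these variables by spending the $L^2$-in-time budget on $u$ stored in $\mathcal{V}(t)$ and $\mathcal{W}(t)$. The target inequality has the Gronwall-absorption shape $\mathcal{A}\lesssim\mathcal{A}(0)+\mathcal{A}\,\mathcal{E}_{total}^{1/2}+\mathcal{A}^{1/2}(\mathcal{V}^{1/2}+\mathcal{W}^{1/2})$, so I want Step~2 below to produce exactly one linear source (giving the $\mathcal{A}^{1/2}(\cdots)$ term) and a pile of strictly quadratic remainders (giving the $\mathcal{A}\,\mathcal{E}_{total}^{1/2}$ term).

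First I would rewrite the first and third equations of \eqref{cvs} in the perturbative form
$$
\partial_t\rho+\nabla\cdot u=-u\cdot\nabla\rho-\rho\nabla\cdot u,\qquad \partial_t U-\nabla u=-u\cdot\nabla U+\nabla u\,U,
$$
so that the right-hand sides are manifestly quadratic while only a single linear coupling to $u$ sits on the left. Applying $|\nabla|^{s_0}$ together with the derivatives defining the $H^{2-s_0}$ norm, pairing with the unknown in $L^2$, integrating by parts in the transport piece $u\cdot\nabla$ (using Kato--Ponce commutator estimates for the fractional part), and integrating in time on $[0,t]$ gives
$$
\||\nabla|^{s_0}(\rho,U)\|_{H^{2-s_0}}^{2}(t)\lesssim \mathcal{A}(0)+\mathcal{A}(t)^{1/2}\!\int_0^t\!\||\nabla|^{s_0+1}u\|_{H^{2-s_0}}\,dt'+\int_0^t|\mathrm{NL}|\,dt'.
$$

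The central step is controlling the linear driver. Dyadic localization together with Gagliardo--Nirenberg interpolation on the low-frequency piece gives
$$
\||\nabla|^{s_0+1}u\|_{H^{2-s_0}}\lesssim \|\nabla u\|_{L^2}^{1-s_0}\|\nabla^2 u\|_{L^2}^{s_0}+\|\nabla^3 u\|_{L^2}.
$$
Rewriting the first term as $[(1+t')\|\nabla u\|_{L^2}^2]^{(1-s_0)/2}[(1+t')^2\|\nabla^2 u\|_{L^2}^2]^{s_0/2}(1+t')^{-(1+s_0)/2}$ and invoking a three-term H\"older inequality with exponents $2/(1-s_0),\,2/s_0,\,2$, the time integral is bounded by $\mathcal{V}(t)^{(1-s_0)/2}\mathcal{W}(t)^{s_0/2}\lesssim \mathcal{V}^{1/2}+\mathcal{W}^{1/2}$; the second term is handled by Cauchy--Schwarz against $(1+t')^{-2}$ and gives $\mathcal{W}^{1/2}$. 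Combining with the prefactor $\mathcal{A}^{1/2}(t)$ produces exactly the term $\mathcal{A}^{1/2}(\mathcal{V}^{1/2}+\mathcal{W}^{1/2})$.

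The nonlinear contributions $u\cdot\nabla\rho$, $\rho\nabla\cdot u$, $u\cdot\nabla U$, $\nabla u\,U$ and the fractional commutators produced in Step~2 are dispatched via the product estimates of Proposition~\ref{prop3} combined with the time-integral bounds of Proposition~\ref{prop2}: one factor is absorbed by the $\mathcal{A}$-norm, while the $u$- or $\nabla u$-type factor has its $L^1_t$ norm bounded by $\mathcal{V}^{1/2}+\mathcal{W}^{1/2}\lesssim\mathcal{E}_{total}^{1/2}$, yielding an overall contribution $\mathcal{A}(t)\,\mathcal{E}_{total}^{1/2}(t)$; taking the supremum in $t$ closes the estimate. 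The principal obstacle is the weight bookkeeping in the linear driver: the decay of $u$ available from $\mathcal{V}$ and $\mathcal{W}$ is only barely $L^2$-integrable in time, and the fine-tuning $s_0\in(0,\tfrac12)$ is a sharp compatibility requirement, not a matter of convenience, forced on the one side by the need to make $(1+t')^{-(1+s_0)/2}$ square-integrable and on the other by keeping $|\nabla|^{s_0}$ subcritical on the low-frequency regime of $(\rho,U)\in L^2$.
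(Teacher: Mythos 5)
Your proposal is correct and follows essentially the same route as the paper: energy estimate on $\||\nabla|^{s_0}(\rho,F-I)\|_{H^{2-s_0}}$ via the perturbative transport equations, the linear driver $\nabla u$ absorbed in $L^1_t$ by interpolating the time weights between $\mathcal{V}$ and $\mathcal{W}$ (the paper packages this as Proposition~\ref{prop2} after a Sobolev identification $\|u\|_{\dot H^{s_0+1}}\sim\|\nabla u\|_{L^{6/(3-2s_0)}}$), and the quadratic remainders closed by product estimates plus the time-integral bounds. You are somewhat more explicit than the paper about the three-exponent H\"older split and about why $s_0>0$ is forced by the $(1+t')^{-(1+s_0)}$ integrability, which is a helpful clarification; the paper also skips the Kato--Ponce commutation and instead bounds the full transport nonlinearity wholesale, but that is a cosmetic difference.
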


\begin{proof}
Noting the first equation of system \eqref{cvs}, using H\"{o}lder inequality and Sobolev imbedding theorem, we directly get the following estimate,
\begin{equation}\label{rhot}
\begin{split}
& \frac{d}{dt} \big{\|} |\nabla|^{s_0} \rho \big{ \|}_{L^2}^{{^2}} \\
\lesssim & \; \big( \|u\cdot \nabla \rho\|_{\dot H^{s_0}} + \|\rho \nabla \cdot u\|_{\dot H^{s_0}} + \| u\|_{\dot H^{s_0 + 1}} \big) \| \rho\|_{\dot H^{s_0}}\\
\lesssim & \; \big(\|u\cdot \nabla \rho\|_{H^1} + \|\rho \nabla \cdot u\|_{H^1} + \| u\|_{\dot H^{s_0 + 1}} \big)\| \rho\|_{\dot H^{s_0}}\\
\lesssim & \; \big(\|u\|_{W^{1,\infty}} \|\nabla \rho\|_{H^1} + {\|\rho\|_{W^{1,3}} \|\nabla \cdot u\|_{L^6} + \|\rho\|_{L^\infty} \|\nabla \nabla \cdot u\|_{L^2}} + \| u\|_{\dot H^{s_0 + 1}}\big) \| \rho\|_{\dot H^{s_0}}.
\end{split}
\end{equation}
Notice here $0< s_0< \frac{1}{2}$ is a constant.
Hence, integrating \eqref{rhot} with time and applying Proposition \ref{prop2}, we can derive:
\begin{equation}\nonumber
\begin{split}
\|\rho\|_{\dot H^{s_0}}^{{^2}}  \lesssim & \; \|\rho_0\|_{\dot H^{s_0}}^{{^2}} + \mathcal{A}(t) \int_0^{t} \big(\|u\|_{W^{1,\infty} }+  {\|\nabla^2 u\|_{L^2}}\big) \; dt'\\
&  + \mathcal{A}(t)^{\frac{1}{2}} \int_0^{t} \|u\|_{\dot H^{s_0 + 1}} \; dt'\\
\lesssim & \;  \mathcal{A}(0) + \mathcal{A}(t) \mathcal{E}_{total}(t)^{\frac{1}{2}} + \mathcal{A}(t)^{\frac{1}{2}} (\mathcal{V}(t)^\frac{1}{2} + \mathcal{W}(t)^\frac{1}{2}).
\end{split}
\end{equation}
{
Similarly, we also have
\begin{equation}\nonumber
\begin{split}
\|F - I\|_{\dot H^{s_0}}^2  \lesssim &  \mathcal{A}(0) + \mathcal{A}(t) \mathcal{E}_{total}(t)^{\frac{1}{2}} + \mathcal{A}(t)^{\frac{1}{2}} (\mathcal{V}(t)^\frac{1}{2} + \mathcal{W}(t)^\frac{1}{2}).
\end{split}
\end{equation}
The estimates for higher-order derivatives in $\mathcal{A}(t)$ can be obtained as follows:
\begin{equation}\nonumber
\begin{split}
& \frac{1}{2}\frac{d}{dt} \big{\|} \nabla^2 \rho \big{ \|}_{L^2}^2 \\
= & \; -\int_{\mathbb{R}^3} \nabla^2 (u\cdot \nabla \rho) \nabla^2 \rho \; dx - \int_{\mathbb{R}^3} \nabla^2(\rho \nabla \cdot u) \nabla^2 \rho \; dx - \int_{\mathbb{R}^3} \nabla^2 \nabla \cdot u \nabla^2 \rho \; dx\\
\lesssim & \|\nabla u\|_{L^\infty} \|\nabla^2 \rho\|_{L^2}^2 + \|\nabla^2 u\|_{L^3} \|\nabla \rho\|_{L^6} \|\nabla^2 \rho\|_{L^2}\\
 & + \|\nabla^3 u\|_{L^2}\|\rho\|_{L^\infty}\|\nabla^2 \rho\|_{L^2} + \|\nabla^3 u\|_{L^2} \|\nabla^2 \rho\|_{L^2},
\end{split}
\end{equation}
and
\begin{equation}\nonumber
\begin{split}
& \frac{1}{2}\frac{d}{dt} \big{\|} \nabla^2 (F-I) \big{ \|}_{L^2}^2 \\
\lesssim & \|\nabla u\|_{L^\infty} \|\nabla^2 (F - I)\|_{L^2}^2 + \|\nabla^2 u\|_{L^3} \|\nabla (F-I)\|_{L^6} \|\nabla^2 (F-I)\|_{L^2}\\
 & + \|\nabla^3 u\|_{L^2}\|F-I\|_{L^\infty}\|\nabla^2 (F-I)\|_{L^2} + \|\nabla^3 u\|_{L^2} \|\nabla^2 (F-I)\|_{L^2}.
\end{split}
\end{equation}
Hence,
\begin{equation}\nonumber
\begin{split}
\|\rho\|_{\dot H^2}^2 + \|F - I\|_{\dot H^2}^2 \lesssim & \mathcal{A}(0) + \mathcal{A}(t)
  \int_0^t (\|\nabla u\|_{L^\infty} + \|\nabla^2 u\|_{L^3} + \|\nabla^3 u\|_{L^2} )\; d\tau\\
 &  + \mathcal{A}(t)^\frac{1}{2} \int_0^t \|\nabla^3 u\|_{L^2} \; d\tau \\
\lesssim & \mathcal{A}(0) + \mathcal{A}(t) \mathcal{E}_{total}(t)^{\frac{1}{2}} + \mathcal{A}(t)^{\frac{1}{2}}  \mathcal{W}(t)^\frac{1}{2}.
\end{split}
\end{equation}
Then, we complete the proof of this lemma.
}
\end{proof}

\subsection{The estimate of basic energy $\mathcal{E}(t)$}

Now, we turn to deal with the basic energy $\mathcal{E}(t)$.
Noting the second equation in system \eqref{cvs} and  the definition of $\mathcal{G}$ in Section 1, we can write (without loss of generality, set $\mu = 1, \lambda = 0$),
\begin{equation}\label{u}
u_t + u\cdot\nabla u - \Delta u - \Big(\frac{1}{\tilde{\rho}}-1 \Big)(\mathcal{G}+\Delta u) = \mathcal{G}.
\end{equation}
Applying operator $\mathbf{\Gamma}  = \mathbb{P} + \sqrt a \; \mathbb{P}^\perp $ on the evolution of $u$ in \eqref{u}, we now derive the following system of $(\mathbf{\Gamma} u, \mathcal{G})$,

    \begin{equation}\label{E0:eq1}
    \left\{
      \begin{array}{ll}
        (\mathbf{\Gamma} u)_t + \mathbf{\Gamma} (u\cdot\nabla u) -\Delta \mathbf{\Gamma} u  -\mathbf{ \Gamma} \big\{ \big(\frac{1}{\tilde{\rho}}-1 \big)(\mathcal{G}+\Delta u) \big\} = \mathbf{\Gamma}  \mathcal{G} ,  \\
    \mathcal{G}_t + \nabla \cdot (u \cdot \nabla G)- \nabla (q \nabla \cdot u) + \nabla \cdot g(\nabla u, F) = \Delta \mathbf{\Gamma}^2 u.
      \end{array}
    \right.
    \end{equation}
Here the functions $q(\cdot), g(\cdot , \cdot)$ are defined in \eqref{qg}.

The estimate of $\mathcal{E}(t)$ can be stated in the following lemma.

\begin{lemma}\label{lemE0}
Assume that energies are defined as in Section 1, we then have the following inequality,

\begin{equation}\nonumber
\mathcal{E}(t) \lesssim \mathcal{E}(0) + \mathcal{E}_{total}(t)^{\frac{3}{2}},
\end{equation}
holds for any positive time $t$.
\end{lemma}

\begin{proof}

The proof of Lemma \ref{lemE0} consists of two independent energy estimates. Firstly, we define partial basic energy $\tilde{\mathcal{E}}(t)$ as follows,

\begin{equation}\label{e01}
\tilde{\mathcal{E}}(t) \triangleq \sup_{0 \leq t' \leq t} \big( \||\nabla|^{-1} \mathbf{\Gamma} u\|_{H^3}^2 + \||\nabla|^{-2}\mathcal{G}\|_{H^3}^2 \big) + \int_0^t \| \mathbf{\Gamma} u\|_{H^3}^2 \; dt' .
\end{equation}
We shall first derive the estimate of $\tilde{\mathcal{E}}(t)$ .

\textbf{Step One:}

Applying derivatives $\nabla^k |\nabla|^{-1}$ ($k = 0, 1, 2, 3$) on the first equation of system \eqref{E0:eq1} and taking inner product with $\nabla^k |\nabla|^{-1} \mathbf{\Gamma} u$. Then applying derivatives
$\nabla^k |\nabla|^{-2}$ on the second equation of  \eqref{E0:eq1} and taking inner product with $\nabla^k |\nabla|^{-2}  \mathcal{G}$. Summing them up, there is,
\begin{equation}\label{E0:eq2}
\frac{1}{2}\frac{d}{dt} \Big(\| |\nabla|^{-1} \mathbf{\Gamma} u\|_{H^3}^2 + \||\nabla|^{-2} \mathcal{G}\|_{H^3}^2 \Big) + \|\mathbf{\Gamma} u\|_{H^3}^2 = \sum_{k=1}^5 \mathcal{E}_k,
\end{equation}
where,
\begin{equation}\nonumber
\begin{split}
\mathcal{E}_1 = & \sum_{k = 0}^{3} \Big\{ \int_{\mathbb{R}^3} \nabla^k |\nabla|^{-1} \mathbf{\Gamma} \mathcal{G} \nabla^k |\nabla|^{-1} \mathbf{\Gamma} u \; dx + \int_{\mathbb{R}^3} \nabla^k |\nabla|^{-2} \Delta \mathbf{\Gamma}^2 u  \nabla^k |\nabla|^{-2} \mathcal{G} \; dx\Big\},\\
\mathcal{E}_2 = & -\sum_{k = 0}^{3}\Big\{\int_{\mathbb{R}^3} \nabla^k |\nabla|^{-1} \mathbf{\Gamma} (u \cdot \nabla u) \nabla^k |\nabla|^{-1} \mathbf{ \Gamma} u\; dx  \\
& \qquad + \int_{\mathbb{R}^3} \nabla^k |\nabla|^{-2}  \nabla \cdot g(\nabla u, F) \nabla^k |\nabla|^{-2}  \mathcal{G}\; dx \Big\}, \\
\mathcal{E}_3 = &\sum_{k = 0}^{3} \int_{\mathbb{R}^3} \nabla^k |\nabla|^{-1} \mathbf{\Gamma} \Big\{ \big(\frac{1}{\tilde{\rho}}-1 \big)({\mathcal{G}+\Delta u}) \Big\} \nabla^k |\nabla|^{-1} \mathbf{\Gamma} u\; dx,\\
\mathcal{E}_4 = & \sum_{k = 0}^{3} \int_{\mathbb{R}^3} \nabla^k |\nabla|^{-2} \nabla (q \nabla \cdot u) \nabla^k |\nabla|^{-2} \mathcal{G} \; dx, \\
\mathcal{E}_5 = & - \sum_{k = 0}^{3} \int_{\mathbb{R}^3} \nabla^k |\nabla|^{-2} \nabla \cdot (u \cdot \nabla G) \nabla^k |\nabla|^{-2} \mathcal{G} \; dx.
\end{split}
\end{equation}
We then turn to give the bounds for the five terms on the right-hand side of \eqref{E0:eq2}.

The first term $\mathcal{E}_1$ just contains appropriate cancellation. Using integration by parts we can derive,
\begin{equation}\label{eq:j1}
\begin{split}
\mathcal{E}_1 = & \sum_{k = 0}^{3}\Big\{ \int_{\mathbb{R}^3} \nabla^k |\nabla|^{-1} \mathbf{\Gamma} \mathcal{G} \nabla^k |\nabla|^{-1} \mathbf{\Gamma} u \; dx + \int_{\mathbb{R}^3} \nabla^k |\nabla|^{-2} \Delta \mathbf{\Gamma} u  \nabla^k |\nabla|^{-2} \mathbf{\Gamma} \mathcal{G} \; dx\Big\}\\
= & \sum_{k = 0}^{3}\Big\{ \int_{\mathbb{R}^3} \nabla^k |\nabla|^{-1} \mathbf{\Gamma} \mathcal{G} \nabla^k |\nabla|^{-1} \mathbf{\Gamma} u \; dx - \int_{\mathbb{R}^3} \nabla^k |\nabla|^{-1} \mathbf{\Gamma} u  \nabla^k |\nabla|^{-1} \mathbf{\Gamma} \mathcal{G} \; dx\Big\}\\
= & \; 0.
\end{split}
\end{equation}

For the next term $\mathcal{E}_2$, using H\"{o}lder inequality and keeping $\|\mathbf{\Gamma} v\|_{L^2} \sim \|v\|_{L^2}$ in mind, it easily yields that,
\begin{equation}\nonumber
\mathcal{E}_2 \lesssim  \big\||\nabla|^{-1} (u\cdot \nabla u) \big\|_{H^3} \||\nabla|^{-1} u\|_{H^3} +
 \big\| |\nabla|^{-1} g(\nabla u, F) \big\|_{H^3}\||\nabla|^{-2}   \mathcal{G}\|_{H^3}.
\end{equation}
Applying  Proposition \ref{prop1} and Proposition \ref{prop3} to the inequality above, we then get the following estimate,

\begin{equation}\label{eq:j2}
\begin{split}
\int_0^t | \mathcal{E}_2(t') | \;dt'
 \lesssim & \; \big\| |\nabla|^{-1} u \big\|_{L_t^\infty (H^3)} \cdot \int_0^t \big{\|} |\nabla|^{-1} (u\cdot \nabla u) \big{\|}_{H^3} \;dt' \\
 &  + \big\| |\nabla|^{-2}   \mathcal{G} \big\|_{L^\infty_t(H^3)} \cdot \int_0^t \big\| |\nabla|^{-1} g(\nabla u, F) \big\|_{H^3} \; dt'\\
\lesssim &\| |\nabla|^{-1} u \big\|_{L_t^\infty (H^3)} {\big(\|u\|_{L^\frac{3}{1+s_0}} \|\nabla u\|_{L^\frac{6}{3-2s_0}} + \|u\|_{L^\infty} \|\nabla u\|_{H^2} + \|u\|_{H^2} \|\nabla u\|_{L^\infty} \big)} \\
& + \| |\nabla|^{-2}   \mathcal{G} \big\|_{L^\infty_t(H^3)}{ \big(\|F-I\|_{L^\frac{3}{1+s_0}} \|\nabla u\|_{L^\frac{6}{3-2s_0}} + \|F-I\|_{L^3}\|\nabla u\|_{L^6}} \\
& {+ \|F-I\|_{L^\infty} \|\nabla^2 u\|_{H^1} + \|\nabla F\|_{H^1} \|\nabla u\|_{L^\infty}\big)} \\
\lesssim & \; \big(\mathcal{V}(t)^{\frac{1}{2}} + \mathcal{W}(t)^{\frac{1}{2}} \big) { \Big [  \|u\|_{L_t^\infty(H^2)}\big\||\nabla|^{-1} u \big\|_{L_t^\infty (H^3)}} \\
& {+  \||\nabla|^{s_0}(F-I)\|_{L_t^\infty( H^{2 - s_0})}   \big\||\nabla|^{-2}   \mathcal{G} \big\|_{L^\infty_t(H^3)}\Big ]}\\
\lesssim & \; \big(\mathcal{V}(t)^{\frac{1}{2}} + \mathcal{W}(t)^{\frac{1}{2}} \big)   \big( \mathcal{E}(t)+ \mathcal{A}(t)^\frac{1}{2}\mathcal{E}(t)^\frac{1}{2} \big).
\end{split}
\end{equation}
Now, we turn to  $\mathcal{E}_3$. We denote $ \frac{1}{\tilde{\rho}}-1 = \frac{1}{\rho+1}-1 \triangleq f(\rho)$ which is a smooth function of $\rho$ and satisfies the condition $f(0)=0$. Then it's easy to get,

\begin{equation}\nonumber
\begin{split}
\mathcal{E}_3 \lesssim & \; \big\| |\nabla|^{-1}\big\{ f(\rho)({\mathcal{G}+\Delta u} ) \big\} \big\|_{L^2} \big\| |\nabla|^{-1} u \big\|_{L^2} \\
& \quad  + \big\|  f(\rho)({\mathcal{G}+\Delta u})  \big\|_{H^1} \|u\|_{H^3}.\\
\end{split}
\end{equation}
Using Proposition \ref{prop3} { and H\"{o}lder's inequality},
\begin{equation}\nonumber
\begin{split}
& \int_0^t |\mathcal{E}_3(t')| \; dt' \\
\lesssim & \; \big\| |\nabla|^{-1} u \big\|_{L^\infty_t (L^2)} \cdot \int_0^t \big\| |\nabla|^{-1}\big\{ f(\rho)({\mathcal{G}+\Delta u}) \big\} \big\|_{L^2}\; dt'  \\
&+ \big\|  f(\rho)({\mathcal{G}+\Delta u})  \big\|_{L^2_t(H^1)} \|u\|_{L^2_t(H^3)}\\
\lesssim & {\big\| |\nabla|^{-1} u \big\|_{L^\infty_t (L^2)} \|f(\rho)\|_{L_t^\infty (L^\frac{6}{3-2s_0}) }\|\mathcal{G}+\Delta u\|_{L_t^1(L^\frac{3}{1+s_0})}} \\
& {+ \Big(\|f(\rho)\|_{L^\infty_{t,x}} \|\mathcal{G}+\Delta u\|_{L_t^2(H^1)} + \|\nabla f(\rho)\|_{L^\infty_t(L^3)} \|\mathcal{G}+\Delta u\|_{L^2_t(L^6)}\Big) \|u\|_{L^2_t(H^3)}}\\
\lesssim & \;
\big(\mathcal{V}(t)^{\frac{1}{2}} + \mathcal{W}(t)^{\frac{1}{2}} \big) \mathcal{E}(t)^\frac{1}{2} { \|f(\rho) \|_{L^\infty_t(L^\frac{6}{3-2s_0})}} + \mathcal{E}(t) {\|f(\rho)\|_{L^\infty_t(L^\infty \cap \dot W^{1,3})}} . \\
\end{split}
\end{equation}
To give the estimate for  $f(\rho)$ we can use Proposition \ref{prop2} and directly get,
\begin{equation}\label{eq:j3}
\begin{split}
& \int_0^t |\mathcal{E}_3(t')| \; dt' \\
\lesssim & \;
\big(\mathcal{V}(t)^{\frac{1}{2}} + \mathcal{W}(t)^{\frac{1}{2}} \big) \mathcal{E}(t)^\frac{1}{2} \|\rho \|_{L^\infty_t(\dot H^{s_0} \cap \dot H^1)} + \mathcal{E}(t)  \|\rho\|_{L^\infty_t(\dot H^{s_0} \cap \dot H^2)}  \\
\lesssim & \; \big(\mathcal{V}(t)^{\frac{1}{2}} + \mathcal{W}(t)^{\frac{1}{2}} \big) \mathcal{E}(t)^\frac{1}{2} \mathcal{A}(t)^{\frac{1}{2}} + \mathcal{A}(t)^{\frac{1}{2}}\mathcal{E}(t).
\end{split}
\end{equation}

Similarly, for the next term $\mathcal{E}_4$, according to Propositions \ref{prop2} and \ref{prop3} there is,

\begin{equation}\label{eq:j4}
\begin{split}
\int_0^t |\mathcal{E}_4(t')| \; dt' \lesssim &  \; \big\||\nabla|^{-2} \mathcal{G} \big\|_{L_t^{\infty}(H^3)} \cdot \int_0^t \big\||\nabla|^{-1} (q \nabla \cdot u)\big\|_{H^3} \; dt'\\
\lesssim & \; \big\|q(\rho)\big\|_{L^\frac{6}{3-2s_0}\cap \dot H^2}  \big(\mathcal{V}(t)^{\frac{1}{2}} + \mathcal{W}(t)^{\frac{1}{2}} \big) \mathcal{E}(t)^\frac{1}{2}\\
\lesssim & \; \big\|\nabla^{s_0} \rho \big \|_{\dot H^{2-s_0}}\big(\mathcal{V}(t)^{\frac{1}{2}} + \mathcal{W}(t)^{\frac{1}{2}} \big) \mathcal{E}(t)^\frac{1}{2} \\
\lesssim & \; \big(\mathcal{V}(t)^{\frac{1}{2}} + \mathcal{W}(t)^{\frac{1}{2}} \big) \mathcal{E}(t)^\frac{1}{2} \mathcal{A}(t)^{\frac{1}{2}}.
\end{split}
\end{equation}

Finally, we turn to the last term  $\mathcal{E}_5$. Through simple calculation we can obtain,

\begin{equation}\nonumber
\begin{split}
\mathcal{E}_5 = & -\sum_{k = 0}^{2} \int_{\mathbb{R}^3} \nabla^k |\nabla|^{-2}  \nabla \cdot (u \cdot \nabla G) \nabla^k |\nabla|^{-2} \mathcal{G} \; dx
- \int_{\mathbb{R}^3} \nabla \nabla \cdot (u \cdot \nabla G) \nabla  \mathcal{G} \; dx \\
 = & -\sum_{k = 0}^{2} \int_{\mathbb{R}^3} \nabla^k |\nabla|^{-2} \nabla \cdot \Big[ \nabla \cdot (u G) - \nabla \cdot u G \Big] \nabla^k |\nabla|^{-2} \mathcal{G}\; dx \\
&\quad -\int_{\mathbb{R}^3} \nabla (\nabla u \cdot \nabla G + u\cdot \nabla \mathcal{G} )\nabla  \mathcal{G} \; dx .
\end{split}
\end{equation}
Hence, using  H\"{o}lder inequality and Sobolev imbedding theorem,

\begin{equation}\nonumber
\begin{split}
|\mathcal{E}_5| \lesssim & \; \Big(\|uG\|_{H^2} + \big\| |\nabla|^{-1} \big[{\nabla \cdot u G} \big] \big\|_{H^2}\Big) \big\||\nabla|^{-2} \mathcal{G}\big\|_{H^2} \\
& + \big\|\nabla(\nabla u\cdot \nabla G) \big\|_{L^2} \|\nabla \mathcal{G}\|_{L^2} + \|\nabla u\|_{L^\infty} \|\nabla \mathcal{G}\|_{L^2}^2.
\end{split}
\end{equation}
Noting here holds that,
\begin{equation}\nonumber
\begin{split}
\|u G\|_{H^2} \lesssim  & \; \|u\|_{L^\frac{3}{s_0}}\|G\|_{L^\frac{6}{3-2s_0}} + \|u\|_{L^\infty} \|\nabla^2 G\|_{L^2} \\
 & \; + \|G\|_{L^\infty} \|\nabla^2 u\|_{L^2}+ \| \nabla G\|_{L^3}\| \nabla u\|_{L^6}, \\
{ \||\nabla|^{-1}[ \nabla \cdot u G]\|_{H^2} \lesssim} & {\|\nabla \cdot u\|_{L^\frac{3}{1+s_0}} \|G\|_{L^\frac{6}{3-2s_0}} + (\|G\|_{W^{1,3}} + \|G\|_{L^\infty}) \|\nabla^2 u\|_{L^2},}
\end{split}
\end{equation}
and,
\begin{equation}\nonumber
\|\nabla (\nabla\cdot u \nabla G)\|_{L^2} \lesssim  \;  \|\nabla u\|_{H^2} \|\nabla G \|_{H^1}.
\end{equation}
Recall that  constant $0 < s_0 < 1/2$,  using {Propositions \ref{prop1}, \ref{prop2} and \ref{prop3}} together, we can derive the estimate as follows,
\begin{equation}\label{eq:j5}
\begin{split}
& \int_0^t |\mathcal{E}_5(t')| \; dt' \\
\lesssim  & \; \big( \mathcal{V}(t)^{\frac{1}{2}} + \mathcal{W}(t)^{\frac{1}{2}} \big)\mathcal{E}(t)^{\frac{1}{2}} \|G \|_{L^\infty_t(W^{1,3} \cap L^\infty)}
+ \mathcal{E}(t)  \mathcal{W}(t)^{\frac{1}{2}}  \\
\lesssim & \; \big( \mathcal{V}(t)^{\frac{1}{2}} + \mathcal{W}(t)^{\frac{1}{2}} \big)\mathcal{E}(t)^{\frac{1}{2}} {(\|F -I \|_{L^\infty_t(\dot H^{s_0} \cap \dot H^2)} + \|\rho\|_{L^\infty_t(\dot H^{s_0} \cap \dot H^2)}) }
+ \mathcal{E}(t)  \mathcal{W}(t)^{\frac{1}{2}}  \\
\lesssim & \; \big( \mathcal{V}(t)^{\frac{1}{2}} + \mathcal{W}(t)^{\frac{1}{2}} \big)\mathcal{E}(t)^{\frac{1}{2}} \mathcal{A}(t)^{\frac{1}{2}}
+ \mathcal{E}(t)  \mathcal{W}(t)^{\frac{1}{2}} .
\end{split}
\end{equation}

Now, we combine the estimates for $\mathcal{E}_1 \thicksim \mathcal{E}_5$, namely \eqref{eq:j1}, \eqref{eq:j2}, \eqref{eq:j3}, \eqref{eq:j4} and \eqref{eq:j5}.
Integrating equality \eqref{E0:eq2} with time, we then derive the bound of $\tilde{\mathcal{E}}(t)$ defined in \eqref{e01}.

\begin{equation}\nonumber
\tilde{\mathcal{E}}(t)  \lesssim \;  \tilde{\mathcal{E}}(0) + \mathcal{E}_{total}(t)^{\frac{3}{2}}.
\end{equation}

\textbf{Step Two:}

In the next step, we focus on the time integral of \textit{effective flux} $\mathcal{G}$ and derive the \textit{a priori} estimate for it.
Operating derivatives $\nabla^k |\nabla|^{-1} (k = 0, 1, 2)$ on the equation of velocity \eqref{u}, then taking inner product with $\nabla^k |\nabla|^{-1} \mathcal{G}$, summing them up it yields that,

\begin{equation}\nonumber
\begin{split}
\big\||\nabla|^{-1} \mathcal{G} \big\|_{H^2}^2 =\mathcal{E}_6 + \mathcal{E}_7 + \mathcal{E}_8,
\end{split}
\end{equation}
in which,
\begin{equation}\nonumber
\begin{split}
\mathcal{E}_6 = & {-}\sum_{k = 0}^2 \int_{\mathbb{R}^3} \nabla^k |\nabla|^{-1} \Delta u \nabla^k |\nabla|^{-1} \mathcal{G} \; dx,\\
\mathcal{E}_7 = &\sum_{k = 0}^{2}\int_{\mathbb{R}^3} \nabla^k |\nabla|^{-1} \big [ u\cdot \nabla u {-} f(\rho) (\mathcal{G}+ \Delta u)\big] \nabla^k |\nabla|^{-1} \mathcal{G} \; dx, \\
\mathcal{E}_8 = & \sum_{k = 0}^{2}\int_{\mathbb{R}^3} \nabla^k |\nabla|^{-1}u_t \nabla^k |\nabla|^{-1} \mathcal{G} \; dx.
\end{split}
\end{equation}

To give the estimate of $\mathcal{E}_6$, we just apply H\"older inequality and derive,

\begin{equation}\label{eq:j6}
\begin{split}
\int_0^t |\mathcal{E}_6(t')| \; dt' & \lesssim \; \int_0^t \|\nabla u\|_{H^2} \big\||\nabla|^{-1} \mathcal{G} \big\|_{H^2} \; dt'  \\
& \lesssim \; \tilde{\mathcal{E}}(t)^{\frac{1}{2}} \Big( \int_0^t  \big\||\nabla|^{-1} \mathcal{G} \big\|_{H^2}^2 \; dt' \Big)^{\frac{1}{2}} .
\end{split}
\end{equation}

For the second term $\mathcal{E}_7$, applying Proposition \ref{prop3} and Propostion \ref{prop1}, it's natural to get,
\begin{equation} \nonumber
\begin{split}
|\mathcal{E}_7| \lesssim & \; \Big( \big\||\nabla|^{-1} (u\cdot \nabla u)  \big\|_{H^2} + \big\||\nabla|^{-1}[f(\rho)(\mathcal{G}+\Delta u)] \big\|_{H^2}\Big) \big\||\nabla|^{-1} \mathcal{G} \big\|_{H^2} \\
\lesssim & \; \big(\|\nabla u\|_{\dot H^{\frac{1}{2} - s_0}} \|u\|_{\dot H^{s_0}} + \|u\|_{L^\infty \cap \dot H^1} \|\nabla u\|_{L^\infty \cap \dot H^1} \big) \big\||\nabla|^{-1} \mathcal{G} \big\|_{H^2} \\
& +\big\{ \|\mathcal{G} + \Delta u\|_{\dot H^{\frac{1}{2} - s_0}} \|\rho\|_{\dot H^{s_0}} + \|\nabla(\mathcal{G} + {\Delta u})\|_{L^2} \|\rho\|_{L^\infty} \\
 & \qquad + \|\nabla \rho \|_{L^3} \|(\mathcal{G} + {\Delta u})\|_{L^6} \big\} \big\||\nabla|^{-1} \mathcal{G} \big\|_{H^2}.
\end{split}
\end{equation}
According to Proposition \ref{prop2} and Sobolev imbeding theorem,

\begin{equation}\label{eq:j7}
\begin{split}
\int_0^t |\mathcal{E}_7(t')| \; dt' \lesssim \tilde{\mathcal{E}}(t) \big\||\nabla|^{-1} \mathcal{G} \big\|_{L^2_t(H^2)} +  \tilde{\mathcal{E}}(t)^{\frac{1}{2}}  \mathcal{A}(t)^{\frac{1}{2}} \big\||\nabla|^{-1} \mathcal{G} \big\|_{L^2_t(H^2)}.
\end{split}
\end{equation}

For the last term $\mathcal{E}_8$, we can use integration by parts and split it into the following two terms,

\begin{equation}\nonumber
\begin{split}
\mathcal{E}_8 & = \frac{d}{dt} \sum_{k = 0}^2 \int_{\mathbb{R}^3} \nabla^k |\nabla|^{-1} u \nabla^k |\nabla|^{-1} \mathcal{G} \; dx
-  \sum_{k = 0}^2 \int_{\mathbb{R}^3} \nabla^k |\nabla|^{-1} u \nabla^k |\nabla|^{-1} \mathcal{G}_t \; dx \\
& \triangleq \mathcal{E}_{8,1} + \mathcal{E}_{8,2}.
\end{split}
\end{equation}
Noting the second equation of system \eqref{E0:eq1}, according to Proposition \ref{prop3} and Proposition \ref{prop1} there is,

\begin{equation}\nonumber
\begin{split}
& |\mathcal{E}_{8,2}| \\
\lesssim & \; \Big |\sum_{k = 0}^2 \int_{\mathbb{R}^3} \nabla^k |\nabla|^{-1} u \nabla^k |\nabla|^{-1}\big[\Delta \Gamma^2 u - \nabla \cdot (u\cdot \nabla G) \\
& \qquad + \nabla (q \nabla \cdot u) - \nabla \cdot g(\nabla u, F)\big] \; dx \Big| \\
\lesssim & \; \|u\|_{H^2}^2 + \|u\|_{H^2} \Big\{ \|u G\|_{H^2} + \big\||\nabla|^{-1} ({\nabla \cdot u G}) \big\|_{H^2} \\
& \qquad +  \big\||\nabla|^{-1} (\nabla \cdot u q) \big\|_{H^2} +  \big\||\nabla|^{-1} g (\nabla u, F) \big\|_{H^2}\Big\} \\
\lesssim & \; \|u\|_{H^2}^2 + \|u\|_{H^2} \Big\{ \|\nabla u\|_{H^1} \| G\|_{L^3 \cap \dot H^2} + \|\nabla u\|_{\dot H^{\frac{1}{2} - s_0}} \big(\|F-I\|_{\dot H^{s_0}} + \|\rho\|_{\dot H^{s_0}}\big) \\
& \qquad + \|\nabla u\|_{L^\infty \cap \dot H^1} \big( \|\rho\|_{L^\infty \cap \dot H^1} + \|F-I\|_{L^\infty \cap \dot H^1} \big)\Big\}.
\end{split}
\end{equation}
Hence, integrating $|\mathcal{E}_8|$ over $(0,t)$ and using Sobolev imbedding theorem and Proposition \ref{prop2}, we shall get,
\begin{equation}\label{eq:j8}
\begin{split}
{\Big | \int_0^t  \mathcal{E}_8(t') \; dt' \Big|} \lesssim & \; {\Big | \int_0^t \mathcal{E}_{8,1}(t') \; dt' \Big|}+ \int_0^t |\mathcal{E}_{8,2}(t')| \; dt' \\
\lesssim & \; \sup_{0 \leq t' \leq t} \big\||\nabla|^{-1} u \big\|_{H^2} \big\||\nabla|^{-1} \mathcal{G} \big\|_{H^2} + \tilde{\mathcal{E}}(t) + \mathcal{V}(t)\mathcal{A}(t)^{\frac{1}{2}} \\
\lesssim & \; \sup_{0 \leq t' \leq t}\big\||\nabla|^{-1} \mathcal{G} \big\|_{H^2}\tilde{\mathcal{E}}(t)^{\frac{1}{2}} + \tilde{\mathcal{E}}(t) + \mathcal{V}(t)\mathcal{A}(t)^{\frac{1}{2}}.
\end{split}
\end{equation}
Combing the estimates \eqref{eq:j6}, \eqref{eq:j7} and \eqref{eq:j8} together, we shall get the bound of $\int_0^t \| |\nabla|^{-1}\mathcal{G}\|_{H^2}^2 dt'$.
Using Young's inequality and Proposition \ref{prop0}, we then complete the proof of this lemma.

\end{proof}

\subsection{The estimate of strongly dissipative energy $\mathcal{W}(t)$}

In this subsection, we will move on to handle the strongly dissipative energy  $\mathcal{W}(t)$. We give the following lemma.

\begin{lemma}\label{lemE2}
Assume that energies are defined as in Section 1, we then have the following inequality,

\begin{equation}\nonumber
\mathcal{W}(t) \lesssim \; \mathcal{W}(0) + \mathcal{E}(t)  + \mathcal{E}_{total}(t)^{\frac{3}{2}},
\end{equation}
holds for any positive time $t$.
\end{lemma}

\begin{proof}

At the beginning of proof, we recall the system of $(\mathbf{\Gamma} u, \mathcal{G})$ in which $f(\rho)=-\frac{\rho}{\rho+1}$.

    \begin{equation}\label{E2:eq1}
    \left\{
      \begin{array}{ll}
        (\mathbf{\Gamma} u)_t +\mathbf{ \Gamma} (u\cdot\nabla u) -\Delta \mathbf{\Gamma} u  - \mathbf{\Gamma} \big\{ f(\rho)(\mathcal{G}+\Delta u) \big\} = \mathbf{\Gamma}  \mathcal{G} ,  \\
    \mathcal{G}_t + \nabla \cdot (u \cdot \nabla G)- \nabla (q \nabla \cdot u) + \nabla \cdot g(\nabla u, F) = \Delta \mathbf{\Gamma}^2 u.
      \end{array}
    \right.
    \end{equation}
Like the operation in the proof of Lemma \ref{lemE0}, we still separate the process into two steps. Firstly, we define partial strongly dissipative energy $\tilde{\mathcal{W}}(t)$ in the following,

\begin{equation}\label{E2:eq2}
\tilde{\mathcal{W}}(t) \triangleq   \sup_{0 \leq t' \leq t} \Big\{(1+t')^2 \big( \|\nabla \mathbf{\Gamma} u\|_{H^1}^2 + \| \mathcal{G}\|_{H^1}^2 \big) \Big\}
+ \int_0^t (1+t')^2\|\nabla^2 \mathbf{\Gamma} u\|_{H^1}^2 \; dt' .
\end{equation}
In the next, we shall derive the estimate for $\tilde{\mathcal{W}}(t)$ via energy method.

\textbf{Step One:}

Applying derivatives $\nabla^k$ ($k = 1, 2$) on the first equation of system \eqref{E2:eq1} and taking inner product with $\nabla^k \mathbf{\Gamma} u$.
Applying derivatives $\nabla^{k-1}$ on the second equation of \eqref{E2:eq1} and taking inner product with $\nabla^{k-1} \mathcal{G}$.  Adding the time weight $(1+t')^2$ respectively and summing them up, there is,

\begin{equation}\label{E2:eq3}
\frac{1}{2}\frac{d}{dt} \Big\{ (1+t')^2 \big(\| \nabla \mathbf{\Gamma} u\|_{H^1}^2 + \| \mathcal{G}\|_{H^1}^2 \big)\Big\} + (1+t')^2\|\nabla^2 \mathbf{\Gamma} u\|_{H^1}^2 =  \sum_{i=1}^6 \mathcal{W}_i,
\end{equation}
with,
\begin{equation}\nonumber
\begin{split}
\mathcal{W}_1 = & \; (1+t') \big(\| \nabla \mathbf{\Gamma} u\|_{H^1}^2 + \| \mathcal{G}\|_{H^1}^2 \big),\\
\mathcal{W}_2 = & \; (1+t')^2\sum_{k = 1}^{2} \Big\{ \int_{\mathbb{R}^3} \nabla^k \mathbf{\Gamma} \mathcal{G}\nabla^k  \mathbf{\Gamma} u \; dx + \int_{\mathbb{R}^3} \nabla^{k-1} \Delta \mathbf{\Gamma}^2 u  \nabla^{k-1} \mathcal{G} \; dx\Big\},\\
\mathcal{W}_3 = & -(1+t')^2\sum_{k = 1}^{2}\Big\{\int_{\mathbb{R}^3} \nabla^k \mathbf{\Gamma} (u \cdot \nabla u) \nabla^k  \mathbf{\Gamma} u\; dx \\
& \qquad +
\int_{\mathbb{R}^3} \nabla^{k-1}  \nabla \cdot g(\nabla u, F) \nabla^{k-1}  \mathcal{G}\; dx \Big\}, \\
\mathcal{W}_4 =  & \; (1+t')^2\sum_{k = 1}^{2} \int_{\mathbb{R}^3} \nabla^k  \mathbf{\Gamma} \Big\{ f(\rho)(\mathcal{G}+\Delta u ) \Big\} \nabla^k  \mathbf{\Gamma} u\; dx,\\
\mathcal{W}_5 = & \; (1+t')^2\sum_{k = 1}^{2} \int_{\mathbb{R}^3} \nabla^{k-1} \nabla (q \nabla \cdot u) \nabla^{k-1} \mathcal{G} \; dx,\\
\mathcal{W}_6 = & \; -(1+t')^2\sum_{k = 1}^{2} \int_{\mathbb{R}^3} \nabla^{k-1} \nabla \cdot (u \cdot \nabla G) \nabla^{k-1} \mathcal{G} \; dx.
\end{split}
\end{equation}

For the first term $\mathcal{W}_1$, through Proposition \ref{prop0} we directly write,

\begin{equation}\label{eq:k1}
\int_0^t |\mathcal{W}_1(t')| \; dt' \lesssim \; \mathcal{V}(t) \lesssim \; \mathcal{E}(t)^{\frac{1}{2}}\mathcal{W}(t)^{\frac{1}{2}}.
\end{equation}

The estimate for $\mathcal{W}_2$ is similar to $\mathcal{E}_1$. We use some cancellation to extinguish this term. Through integration by parts, there is,
\begin{equation}\label{eq:k2}
\begin{split}
\mathcal{W}_2 = & \;(1+t')^2\sum_{k = 1}^{2}\Big\{ \int_{\mathbb{R}^3}  \nabla^k \mathbf{\Gamma} \mathcal{G} \nabla^k  \mathbf{\Gamma }u \; dx + \int_{\mathbb{R}^3}  \nabla^{k-1} \Delta \mathbf{\Gamma} u  \nabla^{k-1}\mathbf{\Gamma} \mathcal{G} \; dx\Big\},\\
= & \;(1+t')^2\sum_{k = 1}^{2}\Big\{ \int_{\mathbb{R}^3}  \nabla^k \mathbf{\Gamma} \mathcal{G} \nabla^k  \mathbf{\Gamma} u \; dx - \int_{\mathbb{R}^3}  \nabla^k  \mathbf{\Gamma} u  \nabla^k\mathbf{\Gamma} \mathcal{G} \; dx\Big\}\\
= & \; 0.
\end{split}
\end{equation}

Also, using integration by parts and H\"{o}lder inequality, we can get the estimate for $\mathcal{W}_3$,{
\begin{equation}\nonumber
\begin{split}
 |\mathcal{W}_3| \lesssim & \;  (1+t')^2  \Big(\|u\cdot \nabla u\|_{H^1} \|\nabla^2 u\|_{H^1} + (\|g(\nabla u, F)\|_{L^2} + \|\nabla \nabla \cdot g(\nabla u , F)\|_{L^2}) \|\nabla \mathcal{G}\|_{L^2} \Big)\\
\lesssim & \;
(1+t')^2 \Big(\|u\|_{L^\infty}\|\nabla^2 u\|_{L^2} + \|\nabla u\|_{L^2} \|\nabla u\|_{L^\infty} + \| u\|_{L^3} \|\nabla u\|_{L^6} \Big)\|\nabla^2 u\|_{H^1} \\
&+
(1+t')^2 \Big(\|F-I\|_{L^3}\|\nabla u\|_{L^6} + \|F-I\|_{L^\infty}\|\nabla^3 u\|_{L^2} + \|\nabla^2 u\|_{L^6} \|\nabla F\|_{L^3} \\
& + \|\nabla u\|_{L^\infty}\|\nabla^2(F-I)\|_{L^2} \Big)\|\nabla \mathcal{G}\|_{L^2}\\
\lesssim & \; (1+t')^2 \|\nabla^2 u\|_{H^1}^2 \|u\|_{H^2} + (1+t')^2 \|\nabla^2 u\|_{H^1} \|\nabla \mathcal{G}\|_{L^2} \|\nabla^{s_0} (F-I)\|_{H^{2-s_0}}.
\end{split}
\end{equation}
}
Which implies,
\begin{equation}\label{eq:k3}
\begin{split}
&\int_0^t |\mathcal{W}_3(t')| \;dt' \lesssim \; \mathcal{W}(t) \big( \mathcal{E}(t)^{\frac{1}{2}} + \mathcal{A}(t)^{\frac{1}{2}}\big).
\end{split}
\end{equation}

For the next term $\mathcal{W}_4$, we apply H\"{o}lder inequality and Proposition \ref{prop1}, it then becomes,

\begin{equation}\nonumber
\begin{split}
 |\mathcal{W}_4|\lesssim & \; (1+t')^2 \big\|f(\rho)(\mathcal{G}+\Delta u) \big\|_{H^1} \|\nabla^2 u\|_{H^1},\\
 \lesssim & \; (1+t')^2 \|f(\rho)\|_{W^{1,3} \cap L^\infty } \|\mathcal{G} + \Delta u \|_{\dot H^1} \|\nabla^2 u\|_{H^1}\\
 \lesssim & \; (1+t')^2\|\nabla^{s_0} \rho\|_{H^{2 - s_0}} \|\mathcal{G} + \Delta u \|_{\dot H^1} \|\nabla^2 u\|_{H^1}.
\end{split}
\end{equation}
Obviously it yields,

\begin{equation}\label{eq:k4}
\begin{split}
& \int_0^t |\mathcal{W}_4(t')|\; dt' \\
\lesssim & \; \sup_{0 \leq t' \leq t} \big\|\nabla^{s_0} \rho \big\|_{H^{2 - s_0}} \cdot \int_0^t (1+t')^2 \big\|\mathcal{G} + \Delta u \big\|_{\dot H^1} \|\nabla^2 u\|_{H^1}\; dt'\\
\lesssim & \; \mathcal{E}_{total}(t)^{\frac{3}{2}}.
\end{split}
\end{equation}

Similarly, using Sobolev imbedding theorem we can give the estimate for $\mathcal{W}_5$,

\begin{equation}\label{eq:k5}
\begin{split}
& \int_0^t |\mathcal{W}_5(t')| \; dt' \\
 \lesssim & \; \int_0^t (1+t')^2 \Big(\|q(\rho)\|_{L^3}\|\nabla \cdot u\|_{L^6} + \|\nabla q\|_{\dot H^1}
\|\nabla^2 u\|_{\dot H^1}\Big) \|\nabla \mathcal{G}\|_{L^2} \;dt'\\
\lesssim & \; \mathcal{E}_{total}(t)^{\frac{3}{2}}.
\end{split}
\end{equation}

The last term is $\mathcal{W}_6$ and we can write,

$$ \mathcal{W}_6 = \; -(1+t')^2 \sum_{k = 1}^2\int_{\mathbb{R}^3} \nabla^{k - 1}\big(u\cdot \nabla \mathcal{G} + \nabla u \cdot \nabla G\big) \nabla^{k - 1}\mathcal{G} \; dx. $$
Using integration by parts, Sobolev imbedding theorem and Proposition \ref{prop1} again,

\begin{equation}\nonumber
\begin{split}
|\mathcal{W}_6| \lesssim & \; (1+t')^2 \big({\|\nabla \cdot u\|_{L^\infty} \|\mathcal{G}\|_{L^2}^2} + \|\nabla^2 u\|_{L^2} \|G\|_{L^3} \|\mathcal{G}\|_{L^6} + \|\nabla u\|_{L^6} \|G\|_{L^3} \|\nabla \mathcal{G}\|_{L^2}\big)\\
& + (1+t')^2 \Big\{\|\nabla  u\|_{L^\infty} \|\nabla \mathcal{G}\|_{L^2}^2 + \|\nabla^2 u\|_{L^6} \|\nabla G\|_{L^3}\|\nabla \mathcal{G}\|_{L^2} \\
& \quad + \|\nabla u\|_{L^\infty} \|\nabla^2 G\|_{L^2}\|\nabla \mathcal{G}\|_{L^2}\Big\}\\
\lesssim & \; {(1+t')^2 \|\mathcal{G}\|_{L^2}^2 \|\nabla^2  u\|_{H^1} }\\
& { + (1+t')^2 \|\nabla^2 u\|_{H^1} (\||\nabla|^{s_0} (F-I)\|_{\dot H^{2-s_0}} + \||\nabla|^{s_0} \rho \|_{\dot H^{2-s_0}})\|\nabla \mathcal{G}\|_{L^2}}.
\end{split}
\end{equation}
Hence, applying Proposition {\ref{prop2}} we will get,
\begin{equation}\label{eq:k6}
\begin{split}
& \int_0^t |\mathcal{W}_6 (t')| \; dt' \\
\lesssim & \; {\sup_{0 \leq t' \leq t} (1+t')^2 \|\mathcal{G}\|_{L^2}^2  \cdot \int_0^t \|\nabla^2  u\|_{H^1} \; dt'}\\
 & +  \sup_{0 \leq t' \leq t} \|\nabla^{s_0} G\|_{H^{2-s_0}}  \cdot \int_0^t (1+t')^2 \|\nabla^2 u\|_{\dot H^1} \|\nabla \mathcal{G}\|_{L^2}\; dt' \\
\lesssim & \; \mathcal{E}_{total}(t)^{\frac{3}{2}}.
\end{split}
\end{equation}

Now, we combine the estimates for $\mathcal{W}_1 \thicksim \mathcal{W}_6$, namely \eqref{eq:k1}, \eqref{eq:k2}, \eqref{eq:k3}, \eqref{eq:k4}, \eqref{eq:k5} and \eqref{eq:k6}.
Integrating equality \eqref{E2:eq3} with time, we then derive the bound of $\tilde{\mathcal{W}}(t)$ defined in \eqref{E2:eq2}.

\begin{equation}\nonumber
\tilde{\mathcal{W}}(t)  \lesssim \;  \tilde{\mathcal{W}}(0) + \mathcal{E}(t)^\frac{1}{2}\mathcal{W}(t)^\frac{1}{2} + \mathcal{E}_{total}(t)^{\frac{3}{2}}.
\end{equation}

\textbf{Step Two:}

Next, we focus on the time integral of $\nabla \mathcal{G}$ and derive the \textit{a priori} estimate for it.
Operating derivative $\nabla $ on \eqref{u} and taking inner product with $\nabla \mathcal{G}$.
Adding the time weight $(1+t')^2$ and summing them up,  it then becomes,

\begin{equation}\nonumber
\begin{split}
  (1+t')^2 \|\nabla \mathcal{G}\|_{L^2}^2 = \mathcal{W}_7 + \mathcal{W}_8+\mathcal{W}_9.
\end{split}
\end{equation}
In which,

\begin{equation}\nonumber
\begin{split}
\mathcal{W}_7= & \; (1+t')^2\int_{\mathbb{R}^3} \nabla \Delta u \nabla \mathcal{G}\; dx,\\
 \mathcal{W}_8 = & \; (1+t')^2\int_{\mathbb{R}^3} \nabla \Big [ u\cdot \nabla u + f(\rho) \big(\mathcal{G}+ \Delta u \big)\Big] \nabla \mathcal{G} \; dx,  \\
\mathcal{W}_9 = &  \; (1+t')^2\int_{\mathbb{R}^3} \nabla u_t \nabla \mathcal{G} \; dx.
\end{split}
\end{equation}

For the term $\mathcal{W}_7$, it directly shows that,

\begin{equation}\label{eq:k7}
\begin{split}
\int_0^t |\mathcal{W}_7(t')| \; dt' \lesssim & \; \int_0^t (1+t')^2\|\nabla^2 u\|_{L^2} \|\nabla \mathcal{G}\|_{L^2} \; dt' \\
\lesssim & \; \tilde{\mathcal{W}}(t)^{\frac{1}{2}} \Big( \int_0^t  (1+t')^2\|\nabla \mathcal{G}\|_{L^2}^2 \; dt' \Big)^{\frac{1}{2}}.
\end{split}
\end{equation}

Applying {H\"{o}lder's inequality and Sobolev imbedding theorem}, we can derive,
\begin{equation} \label{eq:k8}
\begin{split}
& \int_0^t |\mathcal{W}_8(t')|\;dt' \\
\lesssim & \; \int_0^t (1+t')^2 \|u\|_{L^\infty \cap \dot H^1} \|\nabla u\|_{L^\infty \cap \dot H^1} \|\nabla \mathcal{G}\|_{L^2} \; dt' \\
 & \quad  +    \int_0^t (1+t')^2 {\big( \|f(\rho)\|_{L^\infty} + \|\nabla f(\rho)\|_{L^3})} \|\mathcal{G}+\Delta u\|_{\dot H^1} \|\nabla \mathcal{G}\|_{L^2} \; dt' \\
\lesssim & \; \mathcal{E}_{total}(t)^{\frac{3}{2}}.
\end{split}
\end{equation}

For the last term $\mathcal{W}_9$, we still use integration by parts and split it into three terms,

\begin{equation}\nonumber
\begin{split}
\mathcal{W}_9  = &\; \frac{d}{dt} \Big\{ (1+t')^2 \int_{\mathbb{R}^3} \nabla u \nabla \mathcal{G} \; dx \Big\}
-  2(1+t') \int_{\mathbb{R}^3} \nabla u \nabla \mathcal{G} \; dx - \int_{\mathbb{R}^3} \nabla u \nabla \mathcal{G}_t \; dx \\
\triangleq & \; \mathcal{W}_{9,1} + \mathcal{W}_{9,2} + \mathcal{W}_{9,3}.
\end{split}
\end{equation}
Noting the second equation of system \eqref{E2:eq1} we then have,

\begin{equation}\nonumber
\begin{split}
 |\mathcal{W}_{9,3}|
\lesssim & \; (1+t')^2 \Big | \int_{\mathbb{R}^3} \nabla  u \nabla \Big\{ \Delta \Gamma^2 u - \nabla \cdot (u\cdot \nabla G) \\
&\qquad\qquad  + \nabla (q \nabla \cdot u) - \nabla \cdot g(\nabla u, F) \Big\} \; dx \Big| \\
\lesssim & \; (1+t')^2 \|\nabla^2 u\|_{L^2} \Big(\|\nabla^2 u\|_{L^2} + \|u\|_{L^\infty} \|\nabla \mathcal{G} \|_{L^2} + \|\nabla u\|_{L^6}
\|\nabla G\|_{L^2} \\
&+ \|\nabla q(\rho)\|_{L^3} \|\nabla \cdot u\|_{L^6} + \|q(\rho)\|_{L^\infty}\|\nabla \nabla \cdot u\|_{L^2}\\
& { + \|\nabla^2 u\|_{L^2} \|F-I\|_{L^\infty} + \|\nabla u\|_{L^6} \|\nabla F\|_{L^3} } \Big)\\
\lesssim& \; (1+t')^2   \Big\{  \|\nabla^2 u\|_{L^2}^2 + \|\nabla^2 u\|_{L^2}\|\nabla \mathcal{G}\|_{L^2}\|u\|_{H^2}\\
 &{+ \|\nabla^2 u\|_{L^2}^2 \big(\|\nabla^{s_0} \rho\|_{H^{2-s_0}} + \|\nabla^{s_0} (F-I)\|_{H^{2-s_0}}\big)} \Big\}.
\end{split}
\end{equation}
Indeed, there is,

\begin{equation}\label{eq:k9}
\begin{split}
{\Big | \int_0^t \mathcal{W}_9(t')  \; dt' \Big| }\lesssim & \; {| \int_0^t  \mathcal{W}_{9,1}(t')  \; dt'|} + \int_0^t | \mathcal{W}_{9,2}(t')|  \; dt' +\int_0^t | \mathcal{W}_{9,3}(t')|  \; dt'  \\
\lesssim & \; \tilde{\mathcal{W}}(t) + \mathcal{V}(t) +\mathcal{E}_{total}(t)^{\frac{3}{2}} .
\end{split}
\end{equation}

Combing the estimates \eqref{eq:k7}, \eqref{eq:k8} and \eqref{eq:k9} together, we then get the bound of $\int_0^t (1+t')^2\|\nabla\mathcal{G}\|_{L^2}^2 dt'$.
Using Young's inequality and  Proposition \ref{prop0}, we then complete the proof of this lemma.

\end{proof}

\section{Proof of Theorem \ref{thm1}}

In this section, we will combine the above \textit{a priori} estimates for the basic energy $\mathcal{E}(t)$, the strongly dissipative energy $\mathcal{W}(t)$ and the assistant energy $\mathcal{A}(t)$ together. Finally we give the proof of Theorem \ref{thm1}.

Recall that the total energy defined in Section 3 can be written as,
$$ \mathcal{E}_{total}(t) \triangleq \mathcal{E}(t)+\mathcal{V}(t)+\mathcal{W}(t)+\mathcal{A}(t).$$
Due to Lemma \ref{lem},  Lemma \ref{lemE0} and  Lemma \ref{lemE2},  noting the Proposition \ref{prop0} and Young's inequality, we can find some positive constant $C^*$ and there holds,

\begin{equation}\label{ee}
\mathcal{E}_{total}(t) \leq C^* \mathcal{E}_{total}(0)+ C^* \mathcal{E}_{total}(t)^{\frac{3}{2}}.
\end{equation}

Using the setting of initial data in Theorem \ref{thm1}, indeed the condition \eqref{initial}, if the constant $\epsilon$ is small enough,  there is,
\begin{equation}\nonumber
 C^* \mathcal{E}_{total}(0) \leq \frac{\epsilon}{2}.
\end{equation}
Through the local existence theory which can be achieved via standard energy method, there exists a positive time $T$ and,

\begin{equation}\label{t}
 \mathcal{E}_{total}(t) \leq \epsilon,  \qquad \forall t \in [0,T].
\end{equation}
Let $T^*$ be the largest possible time of $T$ for which \eqref{t} holds. We then need to show the fact $T^* = \infty$. Using standard continuation argument, if $\epsilon$ is small enough, the inequality \eqref{ee} implies the conclusion.  We omit the details here and complete the proof of Theorem \ref{thm1}.

\section*{Acknowledgement}
The author sincerely appreciates the helpful suggestion from Professor Zhen Lei.
 The  author is supported by Shanghai Sailing Program (18YF1405500), Fundamental Research Funds for the Central Universities (222201814026), China Postdoctoral Science Foundation (2018M630406, 2019T120308) and NSFC (11801175).

\end{document}